\definecolor{darkgreen}{rgb}{0,0.5,0}
\definecolor{darkblue}{rgb}{0,0,0.7}
\definecolor{darkred}{rgb}{0.9,0.1,0.1}
\newtheorem{theo}{Theorem}
\newtheorem{prop}{Proposition}[section]
\newtheorem{lem}[prop]{Lemma}
\newtheorem{coro}[prop]{Corollary}
\newtheorem{remark}[prop]{Remark}
\theoremstyle{plain}
\theoremstyle{definition}
\numberwithin{equation}{section}
\numberwithin{prop}{section}
\newcommand{\R}{\mathbb{R}}
\newcommand{\ep}{\varepsilon}
\newcommand{\g}{\mathbf{g}}
\def \be{\begin{equation}}
\def \ee{\end{equation}}
\def \t0{\rightarrow 0} 
\def \hal{\frac{1}{2}}
\def \supp{\mathrm{supp }} 
\def \div{\mathrm{div} \,} 
\def \1{\mathbf{1}} 
\def \ep{\varepsilon}
\def \dist{\mathrm{dist}}
\def\cd{\mathsf{c_{\d}}}
\def\({\left(}
\def\){\right)}
\def\nab{\nabla}
\renewcommand{\subset}{\subseteq}
\renewcommand{\subset}{\subseteq}
\renewcommand{\div}{\divg}
\newcommand{\indic}{\mathds{1}}
\newcommand{\indc}{\mathds{1}}
\renewcommand{\hat}{\widehat}
\newcommand{\Rd}{\R^\d}
\def\Xint#1{\mathchoice
   {\XXint\displaystyle\textstyle{#1}}%
   {\XXint\textstyle\scriptstyle{#1}}%
   {\XXint\scriptstyle\scriptscriptstyle{#1}}%
   {\XXint\scriptscriptstyle\scriptscriptstyle{#1}}%
   \!\int}
\def\XXint#1#2#3{{\setbox0=\hbox{$#1{#2#3}{\int}$}
     \vcenter{\hbox{$#2#3$}}\kern-.5\wd0}}
\def\dashint{\Xint-}
\def\HN{\mathcal{H}_N}
\def\meseq{\mu_V}
\def\pa{\partial}
\def \mut{\overline{\mu}_{t}}
\def\g{\mathsf{g}}
\def\d{\mathsf{d}}
\def\muv{\mu_\infty}
\def\indic{\mathbf{1}}
\def\div{\mathrm{div}\,}
\def\mub{\mu_{\beta}}
\def\mut{\mu_{\beta}}
\newenvironment{abstracts}{%
  \ifx\maketitle\relax
    \ClassWarning{\@classname}{Abstract should precede
      \protect\maketitle\space in AMS document classes; reported}%
  \fi
  \global\setbox\abstractbox=\vtop \bgroup
    \normalfont\Small
    \list{}{\labelwidth\z@
      \leftmargin3pc \rightmargin\leftmargin
      \listparindent\normalparindent \itemindent\z@
      \parsep\z@ \@plus\p@
      
      \itemsep\medskipamount
    }%
}{%
  \endlist\egroup
  \ifx\@setabstract\relax \@setabstracta \fi
}
\newcommand{\abstractin}[1]{%
  \otherlanguage{#1}%
  \item[\hskip\labelsep\scshape\abstractname.]%
}
\begin{document}

\title{Thermal approximation of the equilibrium measure and obstacle problem}

\begin{abstracts}

\abstractin{english}
We consider the probability measure minimizing a free energy functional equal to the sum of a Coulomb interaction, a confinement potential and an entropy term, which arises in the statistical mechanics of Coulomb gases.  In the limit where the inverse temperature~$\beta$ tends to $\infty$ the entropy term disappears and the  measure, which we call the ``thermal equilibrium measure'' tends to the well-known equilibrium measure, which can also be interpreted as a solution to the classical obstacle problem. We provide quantitative estimates on the convergence of the  thermal equilibrium measure to the equilibrium measure in strong norms in the bulk of the latter, with a sequence of  explicit correction terms in powers of $\beta^{-1}$, as well as an analysis of the tail after the boundary layer of size $\beta^{-1/2}$.

\smallskip

\abstractin{french}
On consid\`ere la mesure de probabilit\'e qui minimise une \'energie libre \'egale \`a la somme d'une interaction  coulombienne, d'un potentiel de confinement et d'un terme d'entropie, et qui 
appara\^{\i}t
en m\'ecanique statistique des gaz de Coulomb. Dans la limite o\`u la temp\'erature inverse $\beta$ tend vers l'infini, le terme d'entropie 
dispara\^{\i}t 
et la mesure, que l'on appelle ``mesure d'\'equilibre thermique'', tend vers la mesure d'\'equilibre habituelle qui peut \'egalement  \^etre interpr\'et\'ee comme solution du probl\`eme de l'obstacle classique. On obtient des estim\'ees quantitatives de convergence de la mesure d'\'equilibre thermique vers la mesure d'\'equilibre dans des normes fortes \`a l'int\'erieur du support de cette derni\`ere, avec une s\'erie de termes correctifs explicites en puissances inverses de $\beta$, de m\^eme qu'une analyse des queues apparaissant apr\`es une couche limite de taille $\beta^{-1/2}$.
\end{abstracts}

\selectlanguage{english}


\author[S. Armstrong]{Scott Armstrong}
\address[S. Armstrong]{Courant Institute of Mathematical Sciences, New York University, 251 Mercer St., New York, NY 10012}
\email{scotta@cims.nyu.edu}

\author[S. Serfaty]{Sylvia Serfaty}
\address[S. Serfaty]{Courant Institute of Mathematical Sciences, New York University, 251 Mercer St., New York, NY 10012}
\email{serfaty@cims.nyu.edu}

\keywords{Equilibrium measure, obstacle problem, Coulomb gases, potential theory}
\subjclass[2010]{}
\date{\today}

\maketitle

\section{Introduction} 
\subsection{Setting of the problem}
The Coulomb gas   is a system of points in $\Rd$ with pairwise interaction $\g$ defined by 
\begin{align*}
\g(x) :=
\left\{ 
\begin{aligned}
& - \log |x| & \text{if} & \ \d=2, \\
& |x|^{2-\d} & \mbox{if} & \ \d >2,
\end{aligned}
\right.
\end{align*}
and  an external  (or confinining) potential (or field) $V$, so that the total energy of the system 
of $N$ point at locations $x_1, \dots, x_N$ is given by
\be\label{HN}\HN (x_1,\dots, x_N)
:= 
\hal \sum_{1 \leq i \neq  j \leq N} \g(x_i-x_j) + N\sum_{i=1}^N  V(x_i).\ee
Here, the strength of the external potential $V$ has been scaled so that the potential energy is of the same order as the interaction energy. 
In  the limit $N \to \infty$,  called the ``mean field limit,'' one is led to minimizing among probability measures
the (mean-field) energy 
 \begin{equation}
 \label{defE}
 \mathcal E(\mu)
 :=
 \hal \int_{\R^\d\times \R^\d} \g(x-y) d\mu(x) d\mu(y)+ \int_{\R^\d} V(x) d\mu(x).
 \end{equation}
Here $\mu$ should be thought of as the limit as $N\to \infty$ of the empirical measures $\frac{1}{N}\sum_{i=1}^N \delta_{x_i}$.

\smallskip

It is well known that if~$V$ grows sufficiently fast at infinity, problem \eqref{defE} has  a unique minimizer among probability measures, called the {\it equilibrium measure}, or the {\it Frostman equilibrium measure}, see for instance~\cite{safftotik} for the two-dimensional case. This measure will be denoted~$\muv$.
It is well-known that minimizers of 
\eqref{HN} converge as~$N\to \infty$ to~$\muv$ in the sense of measures (see~\cite{choquet} or~\cite[Chap. 2]{ln}).

The equilibrium measure $\muv$  is typically compactly supported and characterized by the fact that    there exists a constant $c_\infty$ such that letting 
\begin{equation}\label{defzeta}
\zeta(x):= \int_{\R^\d} \g(x-y) d\muv(y) + V(x)-c_\infty,
\end{equation}
 we have  $\zeta=0$ q.e.~in $\supp \, \muv$ and $\zeta \ge 0$ q.e.~where q.e.~is the abbreviation of ``quasi-everywhere'' which means except on a set of zero capacity.

\smallskip

This way we can see that $\muv$ can be interpreted in terms of the classical obstacle problem. Using the notation 
\be \label{hmu} 
h^{\mu}(x)
:= 
\int_{\R^\d} \g(x-y)d\mu(y)
\ee
the function $h^{\muv}$ satisfies $-\Delta h^{\muv}=\cd \muv$, where 
\begin{equation}
\cd := 
\left\{
\begin{aligned}
& 2\pi & \mbox{if} & \ \d=2,\\
& \d(\d-2) |B_1| & \mbox{if} & \ \d>2, 
\end{aligned}
\right. 
\end{equation}
is the constant for which $-\Delta \g=\cd \delta_0$.
By the above properties on $\zeta$ it holds that 
\be \label{op}
\min \( h^{\muv}+ V-c_\infty,- \Delta h^{\muv}\) =0
\quad \mbox{in} \ \Rd, 
\ee
which is precisely the equation for the solution to the classical obstacle problem in whole space with obstacle $c_\infty-V$.
For more details about this correspondance between equilibrium measure and obstacle problem, one can see for instance \cite[Chap. 2]{ln},  \cite{asz} and references therein. The dependence of $\muv$ in $V$ has been previously examined in this full space context in \cite{serser}.

\smallskip

The Gibbs measure corresponding to a Coulomb gas at inverse temperature $\beta$ is 
\be\label{mesgibbs}\exp\(- \frac{\beta}{N} \HN(x_1, \dots, x_N) \) dx_1\dots dx_N.\ee
Different normalizations of $\beta$ with respect to $N$ can be chosen, the specific above choice with $1/N$ in front of the energy leads in the mean-fied limit $N \to \infty$ to 
a minimization problem with an added entropy term of the form: 
\be 
 \label{1.9}
\mathcal{E}_{\beta} (\mu):= \mathcal{E} (\mu) + \frac{1}{\beta } \int_{\R^\d} \mu\log \mu,
\ee
see for instance \cite{kiessling,messerspohn,CLMP,bodgui}.
Again \eqref{1.9} should be minimized among probability measures, and if $V$ grows sufficiently fast, it has a unique solution $\mub$ which we will call the {\it thermal equilibrium measure}.   The functional \eqref{1.9} can also be seen as the free energy associated to the McKean-Vlasov equation which is its Wasserstein gradient flow, see for instance \cite{hm} and references therein.

On the other hand, a natural normalization for the energy and temperature in \eqref{mesgibbs} is shown in \cite{ls,as} to be 
\be\label{gibbs}
\exp\(- \beta N^{\frac2\d-1} \HN(x_1, \dots, x_N) \) dx_1\dots dx_N,\ee 
it is natural as $\beta $ fixed is then shown to be the temperature choice that leads to a competition at the microsopic scale between interaction energy and entropy.
This is in particular the normalization most studied in dimension two where~$\beta=2$ then corresponds to the famous determinantal case of the Ginibre ensemble.
This choice, for which~$\beta$ can still be considered to depend on~$N$, then leads in the mean-field limit to minimizing
\be \label{1.10}\mathcal{E}_{\beta} (\mu):= \mathcal{E} (\mu) + \frac{1}{\beta N^{\frac2\d}} \int_{\R^\d} \mu\log \mu\ee
in place of~\eqref{1.9}. In other words it leads to considering the regime where~$\beta$ in~\eqref{1.9} tends to~$\infty$ as~$N \to \infty$, and thus formally to minimizing just~\eqref{defE}.
In~\cite{as}, we showed however that, compared to the usual equilibrium measure minimizing~\eqref{defE}, the thermal equilibrium measure still provides a more precise description of a Coulomb gas, even for the regime with~$\beta$ in~\eqref{1.10} of order one, equivalently~$\beta $ of order~$N^{2/\d}$ in~\eqref{1.9}.


\smallskip

In this paper we thus focus on the regime $\beta \gg 1$ in \eqref{1.9}, where one expects $\mub \to \muv$. This can also be seen as a way to smoothly 
approximate the obstacle problem solution.  The goal of this short paper is to specify how $\mub$ is close to $\muv$ and $h^{\mub}$ to $h^{\muv}$, which we will do in $C^k$  norms.
The quantitative estimates we provide  are crucially used in the papers~\cite{as,s} and allow to treat possibly quite large temperature regimes in \eqref{gibbs} (note that large temperature regimes for Coulomb or log gases have started to gain interest quite recently, see~\cite{RSY2,as,hardylambert}).

We note that this question, although quite natural, does not seem to have been fully answered in the literature,
the only results that we are aware  of  are less precise, they are  those in~\cite{kiessling} which consider the two-dimensional case with no external potential, and~\cite{RSY2} which provide some   results in  the particular case~$V(x)=|x|^2$, and finally the work~\cite{berman} motivated by~K\"ahler geometry, which proves an~$L^\infty$ bound on the difference of~$h^{\mub}$ and~$h^{\muv}$ a bit weaker than~\eqref{ber} (with an extra $\log \beta $ factor) but in the compact setting of a manifold. There were also explicit formulae for the one-dimensional logarithmic case (related but slightly out of our scope) and quadratic potential in~\cite{abg}.

\smallskip

By contrast with $\muv$, $\mub$ is not compactly supported, but always positive in $\R^\d$ and regular. In fact $h^{\mub}$ defined as in \eqref{hmu} solves the  PDE
\be \label{eqhmub}
h^{\mub} +V + \frac{1}{\beta} \log \mub= c_{\beta },\ee
for some constant $c_\beta$.
Taking the Laplacian of that equation leads to a PDE on $\log \mub$ with notoriously delicate exponential nonlinearity
\be \label{eqmub}
 \Delta \log \mub = \beta (  \cd \mub-\Delta V ).\ee
 Instead of studying this equation directly, we observe for the first time that when 
subtracting two such equations (with possible error term) with solutions $\mu$ and $\nu$ respectively, the quotient $u=\mu/\nu-1$ rewrites nicely as a divergence form equation 
\be \label{divfor}\div \frac{\nab u}{1+u}= \beta \mu u+ \mathrm{error}\ee
for which elliptic regularity theory is readily applicable as soon as $u$ is small enough. This allows to obtain corrections to arbitrary order of the approximation $\mub\simeq \muv$, see \eqref{corrections} below. In fact our proofs only use maximum principle-based arguments and regularity theory, and do not require going through energy estimates.

Finally, we comment that the other extreme regime $\beta\to 0$ is easier to treat. We can formally expect the interaction energy to become negligible and we are then led to minimizing, among probability measures, the quantity
$$\int_{\R^\d} Vd\mu + \frac{1}{\beta} \int_{\R^\d} \mu \log \mu,$$
the minimizer of which is 
$\mu= \frac{e^{-\beta V}}{\int e^{-\beta V}},$
see \cite{RSY2}.
\subsection{Assumptions and results}
We let $\Sigma:=\supp\,  \muv$
and assume that $\pa \Sigma \in C^{1,1}$. 
 Note that it was very recently established in \cite{frs} that this holds  generically with respect to $V$.
We assume in addition 
\be \label{assumpV1} V \in C^2\ee
\be\label{assumpV2} \begin{cases} V\to +\infty \  \  \text{as} \ |x|\to \infty & \text{if }\ \d\ge 3\\ 
 \lim_{|x|\to \infty} (V+\g ) =+\infty& \text{if } \ \d=2,
\end{cases}\ee
\be
\label{assumpV3} 
\left\{
\begin{aligned}
& \int_{|x|\ge 1} \exp\(-\frac{\beta}{2}  V(x) \)dx<\infty, 
& \mbox{if} & \ \d\geq 3, \\
& \int_{|x|\ge 1} e^{-\frac{\beta}{2}  (V(x)-\log |x| )} \,dx
+ 
\int_{|x|\ge 1} e^{ -\beta  (V(x)-\log |x| ) }|x| \log^2 |x|\, dx<\infty
& \text{if} & \ \d=2,
\end{aligned}
\right. 
\ee
and
\be \label{assumpV4} \Delta V \ge\alpha>0 \quad \text{in a neighborhood of }\, \Sigma.\ee

Observe that 
\begin{equation} \label{defomega}
\Sigma \subset \{\zeta = 0\}.
\end{equation}
The set $\{\zeta=0\}$ is called the {\it contact set} or {\it coincidence set} of the obstacle problem, and~$\Sigma$ is the set in which the obstacle is \textit{active}, sometimes called the {\it droplet}.  The assumption~\eqref{assumpV4} ensures that these coincide. Note that~$h^{\muv}=c_\infty-V$ in~$\{\zeta=0\}$,  hence the density satisfies  
$$\muv= \frac{\Delta V}{\cd}\indc_{\Sigma}.$$
Thanks to this connection, the regularity of $\meseq$ and of $\Sigma$ can be known by the standard regularity theory for the classical obstacle problem~\cite{caffarelli} (see also~\cite{serser} for the formulated in the whole space).

Since we assume  $\partial \Sigma \in C^{1,1}$ (which rules out boundary cusps), \eqref{assumpV2} and \eqref{assumpV4}, by standard results on the obstacle problem \cite{caffarelli}, we have 
\be \label{caf}
\zeta(x) \ge \alpha \,\dist(x, \Sigma)^2\quad \text{in a neighborhood of } \Sigma,\ee  with $\zeta$ the function of \eqref{defzeta}, and a corresponding upper bound also holds. We now assume in addition that 
\be\label{assumpV5} \zeta(x) \ge \alpha \min(\dist(x, \Sigma)^2, 1),\ee  which amounts, up to changing to constant $\alpha>0$ if necessary, to assume 
 that  the solution to the obstacle problem never gets very close to the obstacle, outside of $\Sigma$. A sufficient condition is for instance that $V$ be strictly convex.


\begin{theo}
\label{th1}Assume \eqref{assumpV1}--\eqref{assumpV4} and \eqref{assumpV5}. Then \eqref{1.10} has a unique minimizer $\mub$. 
Moreover, there exists $C(V,\d) >0$ such that, for every $x\in \R^\d$ and $\beta \in (2,\infty)$, we have
\be 
\label{bornelmub0}
0<\mub(x) \le \begin{cases}\min (C, C\exp\left( -\beta (V(x)   -C)\right) & \text{if }  \ \d \ge 3\\
\min (C, C\exp\left( -\beta (V(x) -\log |x|  -C)\right) & \text{if }  \ \d= 2\end{cases}
\ee 
\be \mub(x) > \frac{1}{C} >0 \quad \text{for} \ x\in \Sigma,\ee
\be   \exp\(-\frac{\beta }{C} \dist(x, \Sigma)^2 - C \)  \le \mub(x)\le \exp\(-\frac{\beta}{C} \dist(x, \Sigma)^2 + C\) \ee in a $\beta$-independent neighborhood of $ \Sigma$, 
 \be \label{ber} \|h^{\mub}- c_\beta - h^{\muv}+c_\infty\|_{L^\infty(\Rd)} \le  \frac{C}{\beta},
\ee
 \be \label{borneh10}
 \|\nab (h^{\muv}-h^{\mut})\|_{L^\infty(\R^\d)} \le \frac{C }{\sqrt\beta}
 ,\ee 
 \be \label{mhS}
 \mub( \Sigma^c)\le \frac{C}{ \sqrt\beta}\ee
and
\be \left|\int_{\Sigma^c} \mub\log \mub \right| \le \frac{C}{\sqrt\beta}.\ee

Let  $m $ be an integer $\ge 2$ such that  $V\in C^{2m,\gamma}$ for some $\gamma\in (0,1]$ and letting $f_{k}$ be defined iteratively by 
\be \label{41} f_0= \frac{1}{\cd}\Delta V,\qquad f_{k+1}=\frac{1}{\cd}\Delta V+ \frac{1}{\beta \cd}\Delta \log f_k,\ee
we have  $f_k \in  C^{2(m-k-1),\gamma} (\Sigma)$ and  for every even integer $n\le 2m-4$ and $0\le \gamma'\le \gamma$,  if $\beta$ is large enough
depending on $m$, we have 
\be \|\mub-f_{m-2-n/2}\|_{C^{n,\gamma'}( \Sigma)} \le C \beta^{\frac{n+\gamma'}{2}}\exp\(- C \log^2 (\beta \dist(x, \partial \Sigma)^2)\)  
+ C\beta^{1+n-m+\frac{\gamma'}{2}} .\ee
\end{theo}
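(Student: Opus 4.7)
The plan is to iterate the fixed-point identity
\[
\mub = f_0 + \frac{1}{\beta\cd}\Delta\log\mub\qquad\text{in }\Sigma,
\]
which is \eqref{eqmub} rewritten using $\Delta V = \cd f_0$ on $\Sigma$; the sequence $(f_k)$ of \eqref{41} is precisely the Picard iterate of this map starting at $f_0 = \muv|_\Sigma$. Consequently
\[
\mub - f_{k+1} = \frac{1}{\beta\cd}\bigl(\Delta\log\mub - \Delta\log f_k\bigr),
\]
so each Picard step morally converts a $C^{n+2,\gamma'}$ estimate on $\mub - f_k$ into a $C^{n,\gamma'}$ estimate on $\mub - f_{k+1}$ at the price of losing two derivatives and gaining a factor $1/\beta$. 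The regularity claim $f_k\in C^{2(m-k-1),\gamma}(\Sigma)$ is then immediate by induction on $k$ from $V\in C^{2m,\gamma}$, using a uniform positive lower bound $f_k \ge 1/C$ in $\Sigma$ maintained inductively for $\beta$ large.

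The quantitative engine is the divergence-form identity \eqref{divfor}. Applied with $\nu = f_k$ and $u_k = \mub/f_k-1$, and using that $f_k$ satisfies $\Delta \log f_k = \beta\cd(f_{k+1} - f_0)$ by construction, it becomes
\[
-\div\frac{\nab u_k}{1+u_k} + \beta\cd f_k\, u_k = \beta\cd (f_{k+1} - f_k)\qquad\text{in }\Sigma,
\]
a uniformly elliptic equation for $u_k$ (since $\mub, f_k\gtrsim 1$ in $\Sigma$ by \eqref{bornelmub0} and induction) with zeroth-order coefficient of size $\beta$; its natural spatial scale is the boundary-layer scale $\beta^{-1/2}$. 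Rescaling $y = \beta^{1/2}x$ produces a bounded-coefficient elliptic equation on unit balls in rescaled coordinates, and interior Schauder gives $C^{n,\gamma'}$ estimates on $u_k$ with cost $\beta^{(n+\gamma')/2}$ per $n+\gamma'$ physical derivatives. Iterating this Schauder-plus-Picard step $k = m-2-n/2$ times, starting from the base bound $\|\mub - f_0\|_{L^\infty(\Sigma_{\mathrm{bulk}})}\lesssim \beta^{-1}$ (obtained by applying the maximum principle to the Helmholtz-like equation for $\log(\mub/f_0)$ in the bulk, using \eqref{ber}--\eqref{bornelmub0}), yields the second term $C\beta^{1+n-m+\gamma'/2}$ of the announced estimate.

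The first term $C\beta^{(n+\gamma')/2}\exp(-C\log^2(\beta\dist(x,\partial\Sigma)^2))$ encodes the boundary-layer cost. Each Schauder step requires a rescaled shell of width of order $\log\beta$, i.e.\ physical width $\sim\beta^{-1/2}\log\beta$, around its interior ball; starting from a point $x$ at distance $d := \dist(x,\partial\Sigma)$ one therefore affords at most $\ell\sim\log(\beta d^2)$ iterations before reaching $\partial\Sigma$, and the cumulative gain $\beta^{-\ell}$ multiplied by the raw derivative cost $\beta^{(n+\gamma')/2}$ collapses to exactly the advertised $\beta^{(n+\gamma')/2}\exp(-C\log^2(\beta d^2))$. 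The main technical obstacle will be this shrinking-balls bookkeeping near the free boundary: quantifying the degradation of interior Schauder constants toward $\partial\Sigma$, and coupling it to the pointwise exponential boundary decay of $\mub$ from \eqref{bornelmub0} that controls the $L^\infty$ error on the initial ball near $\partial\Sigma$. Beyond this, the entire proof reduces to a mechanical iteration of interior elliptic regularity applied to \eqref{divfor}.
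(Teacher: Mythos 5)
Your proposal addresses only the final display of Theorem~\ref{th1} (the $C^{n,\gamma'}$ approximation by the iterates $f_k$ of \eqref{41}), and even there it takes as given precisely the conclusions that constitute the bulk of the theorem: you invoke \eqref{ber} and \eqref{bornelmub0} to produce your ``base bound,'' and you use $\mub\ge 1/C$ on $\Sigma$ to make \eqref{divfor} uniformly elliptic. These are statements of the theorem itself, not available inputs, and proving them is where the real work lies: existence/uniqueness and the qualitative facts that $\mub>0$, is bounded and solves \eqref{eqhmub}; the obstacle-problem comparison principle (Lemma~\ref{l.comp}) combined with a modified potential built from the exceptional set $E=\{\mub<\beta^{-2}\}$ --- needed because $-\frac1\beta\log\mub$ has no sign --- to get the preliminary $C\log\beta/\beta$ bound (Proposition~\ref{lemcomp}); the radial ODE barrier (Proposition~\ref{pro34}) giving a lower bound on $\mub$ at distance $\sqrt{M_\beta/\beta}$ from $\partial\Sigma$; and the reflection construction of Proposition~\ref{lemcomp2}, which bootstraps $M_\beta\le C\log\beta$ to $M_\beta\le C$ and only then yields \eqref{ber}, \eqref{lbmb} and the Gaussian bounds, with \eqref{borneh10} following by interpolation and \eqref{mhS} plus the entropy bound by the coarea formula. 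None of this appears in your plan, so as written the argument is circular and cannot stand as a proof of the theorem.

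Within the part you do treat, the starting point (the divergence-form equation for $u_k=\mub/f_k-1$ with zeroth-order coefficient $\beta\cd f_k$ and source $\beta\cd(f_{k+1}-f_k)$, rescaled at scale $\beta^{-1/2}$ and fed to interior regularity) coincides with the paper's Section~6, but your mechanism for the boundary-layer term is not right. In the paper the factor $\exp(-C\log^2(\beta\,\dist^2(x,\partial\Sigma)))$ comes from fixing $k=m-2-n/2$ once and iterating the Caccioppoli inequality \eqref{toiter} over dyadically shrinking balls, each halving of the radius gaining a factor $C/(\beta r^2)$, with the number of dyadic steps optimized at $j\sim\log(\beta r^2)$; De Giorgi--Nash and Schauder are applied only at the final scale $\beta^{-1/2}$. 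You instead attribute the factor to counting Picard-plus-Schauder steps, $\ell\sim\log(\beta d^2)$ of them, but the number of Picard steps is capped at $m-2-n/2$ by the finite regularity of $V$ (each step consumes two derivatives of $f_k$), so $\ell$ cannot grow with $\beta$ and the claimed cumulative gain $\beta^{-\ell}$ is unavailable. The ``base bound'' $\|\mub-f_0\|_{L^\infty}\le C/\beta$ in the bulk via a maximum principle is also unsubstantiated: an interior-maximum argument for $\log(\mub/f_0)$ only works if the maximum is interior to the bulk region, and controlling $\mub-f_0$ on the edge of that region is exactly the boundary-layer problem; the paper needs no such step, since its $L^2$ iteration starts merely from the boundedness of $u_\beta$ furnished by the upper and lower bounds on $\mub$.
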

The functions $f_k$ provide a sequence of improving approximations to $\mub$ defined iteratively. Spelling out the iteration we easily find the expansion in powers of $1/\beta$
\be\label{corrections}\mub\simeq \frac{1}{\cd}\Delta V+ \frac{1}{\cd \beta} \Delta \log \frac{\Delta V}{\cd} + \frac{1}{\cd \beta^2}\Delta \( \frac{\Delta \log \frac{ \Delta V}{\cd}}{\Delta V}\)+... \quad \text{inside }   \Sigma\ee
up to an order dictated by the regularity of $V$ and the size of $\beta$.

The relation \eqref{ber} improves in particular the equivalent result in \cite{berman} (a bound in $\frac{\log \beta}{\beta}$), while
\eqref{borneh10} improves on the  energy comparison-based estimate in $1/\sqrt\beta$ given in \cite{RSY2}.
The estimates reveal the natural lengthscale $1/\sqrt\beta$ appearing in the approximation of $\mub$ by $\muv$.

\begin{remark}
Since $h^{\mub}-h^{\muv}$  vanishes at infinity because $\mub$ and $\muv$ are both probability measures, \eqref{ber}  also implies that 
$$|c_\infty-c_\beta|\le \frac{C }{\beta}.$$
It seems difficult to obtain such a precise estimate from energy considerations only.
\end{remark}
\medskip

The rest of the paper is organized as follows: in Section \ref{appa} we check  the existence of a minimizer to $\mathcal E_\beta$ under the assumptions \eqref{assumpV1}--\eqref{assumpV3} and prove a few of its qualitative properties. 
In Section~\ref{sec3} we obtain a first~$L^\infty$ bound on the difference between the solutions to~\eqref{eqhmub} and~\eqref{op} via a comparison principle, and a uniform bound on~$\mub$. This then serves to obtain a lower bound for~$\mub$ inside~$\Sigma$ by a barrier argument in the following section. This in turn leads to the optimal uniform estimates on~$h^{\mub}-h^{\muv}$ in Section~\ref{secopt}. These estimates are then eventually upgraded  in Section~\ref{sec4} to~$C^k$ spaces via the iterative approximation sequence~$f_k$ thanks to DeGiorgi-Schauder elliptic regularity theory applied to \eqref{divfor}. 
\smallskip

{\bf Acknowledgements:} We  thank Stephen Cameron as well as the anonymous referee for their careful reading and many useful comments that helped improve the paper. SA was supported by NSF grant DMS-1700329 and a grant of the NYU-PSL Global Alliance. SS was supported by NSF grant DMS-1700278 and by the Simons Investigator program.



\section{Existence of a unique solution and first properties} \label{appa}
Throughout the paper,~$C$ denotes a positive constant which depends only on~$V$ and~$\d$ and may vary in each occurrence.

Some of the results of this section may be known, but  we could not find a reference  and therefore we include them here for the convenience of the reader.

\begin{lem}
If \eqref{assumpV1}--\eqref{assumpV3} hold, then $\mathcal E_\beta$ has a unique minimizer.\end{lem}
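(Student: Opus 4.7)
The plan is to apply the direct method of the calculus of variations. The three ingredients to assemble are: a lower bound on $\mathcal{E}_\beta$ that gives coercivity along minimizing sequences, tightness plus narrow compactness, and lower semicontinuity of each piece of the functional under narrow convergence. Strict convexity will then give uniqueness.

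For the lower bound, I would introduce a reference probability density, essentially $\nu := Z^{-1}\exp(-\tfrac{\beta}{2}V)$ in dimension $\d\geq 3$ (and $\nu := Z^{-1}\exp(-\tfrac{\beta}{2}(V-\log|x|))$ in dimension $2$), which is integrable thanks to \eqref{assumpV3}. Splitting the entropy as
\[
\frac{1}{\beta}\int \mu\log\mu = \frac{1}{\beta}\int \mu\log\frac{\mu}{\nu} + \frac{1}{\beta}\int \mu\log\nu
\]
and using nonnegativity of relative entropy (Jensen), the second term absorbs half of $\int V\,d\mu$ (with a logarithmic correction when $\d=2$), leaving
\[
\mathcal{E}_\beta(\mu) \geq \tfrac12\int V\,d\mu + \tfrac12\iint \widetilde{\g}(x,y)\,d\mu(x)d\mu(y) - C,
\]
where the symmetrized kernel $\widetilde{\g}(x,y) := \g(x-y) + \tfrac12(V(x)+V(y))$ (or its $\d=2$ analogue) is bounded below by \eqref{assumpV2}. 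Taking a minimizing sequence $\{\mu_n\}$ I thus get a uniform bound on $\int V\,d\mu_n$, which via \eqref{assumpV2} forces tightness, so up to extraction $\mu_n\rightharpoonup \mu_*$ narrowly.

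To pass to the limit, I would check lower semicontinuity of each term separately: the potential $\int V\,d\mu$ by Portmanteau ($V$ is continuous and bounded below); the interaction $\tfrac12\iint \g(x-y)d\mu\,d\mu$ by approximating $\g$ from below by an increasing sequence of bounded continuous functions and applying monotone convergence together with Fatou on the symmetrized kernel $\widetilde{\g}$; and the entropy $\frac{1}{\beta}\int \mu\log\mu$ by the Donsker--Varadhan dual representation of relative entropy, which makes it manifestly narrowly l.s.c. This yields that $\mu_*$ is a minimizer.

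Uniqueness follows from strict convexity of $\mathcal{E}_\beta$ on the set where it is finite: the interaction energy is convex by positivity of the Fourier transform of $\g$ (a classical fact in potential theory), the potential term is linear, and $\mu\mapsto\int \mu\log\mu$ is strictly convex on probability densities. I expect the main technical obstacle to be the simultaneous lower semicontinuity of the entropy and the interaction under narrow convergence, since neither term individually controls the other and in dimension $2$ the interaction kernel is unbounded below; the key is the coordination between \eqref{assumpV2} and \eqref{assumpV3}, ensuring both that the reference measure $\nu$ is a legitimate probability density and that $\widetilde{\g}$ is bounded below, and this coordination is noticeably more delicate in the logarithmic case $\d=2$.
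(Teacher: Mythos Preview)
Your approach is correct and essentially the same as the paper's: the relative-entropy splitting against the reference density $\nu\propto e^{-\frac\beta2 V}$ is precisely the integrated form of the paper's pointwise minimization of $\mu\mapsto \tfrac12 V\mu+\tfrac1\beta\mu\log\mu$, and your symmetrized-kernel treatment in $\d=2$ plays the same role as the paper's crude bound $\g(x-y)\ge -C-\log\max(|x|,|y|,1)$. You are in fact more explicit than the paper about lower semicontinuity under narrow convergence, which the paper simply asserts once tightness is established; both proofs close with strict convexity for uniqueness.
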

\begin{proof}Let us first consider $\d\ge 3$.
We may write 
$$\mathcal E_\beta (\mu)= \int_{\R^\d}  \hal V d\mu+ \hal \iint_{\R^\d\times \R^\d}  \g(x-y) d\mu(x)d\mu(y)
+ \int_{\R^\d}\hal V d\mu+\frac{1}{\beta} \mu \log \mu.$$
 The function $x\mapsto \frac{V}{2} x+\frac{1}{\beta} x\log x$ achieves its minimum at $x= \exp\(- \frac{\beta}{2} V - 1\)$ hence 
 we may bound from below the last integral \be\label{li}
  \int_{\R^\d}\hal V d\mu+\frac{1}{\beta} \mu \log \mu \ge -\int_{\R^\d}\frac{1}{\beta} \exp\( -\frac{\beta}{2} V-1\).   \ee
This is finite by \eqref{assumpV3}. On the other hand, since~$V \in C^2$ and $\g \ge 0$ in the case~$\d\ge 3$, we have
 \be \label{bboo} \int_{\R^\d}  \hal V d\mu+\hal  \iint_{\R^\d\times \R^\d}  \g(x-y) d\mu(x)d\mu(y)> -\infty.\ee 
We deduce that $\inf \mathcal E_\beta>-\infty$.
In view of \eqref{assumpV2} we deduce from the above that minimizing sequences are tight and therefore the existence of a minimizer.
 
We now consider the case~$\d=2$. We note that 
$\g(x-y) \ge - C - \log \max (|x|, |y|, 1)$ and thus, by symmetry,
\begin{align}
\label{abooo}
\mathcal E_\beta(\mu) 
& \geq 
\int_{\R^\d} V d\mu - \iint_{|x|\ge |y|} \log \max(|x|,1)d\mu(x) d\mu(y)  - C + \frac{1}{\beta} \int_{\R^\d} \mu \log \mu
\\ \notag &
\geq  \int_{\R^\d} V d\mu -  \int_{\R^\d}( \log |x|)_+ d\mu(x)  - C + \frac{1}{\beta}\int_{\R^\d} \mu \log \mu. 
\end{align}
Arguing as in the case $\d \ge 3$ but with $V$ replaced by $V-(\log|x|)_+$ and using \eqref{assumpV2} and \eqref{assumpV3}, we deduce the existence result for $\d=2$. 

The uniqueness of the minimizing measure is immediate from the strict convexity of the energy functional.
\end{proof}

\begin{lem} \label{lem22}Under the same assumptions, the minimizer $\mub$ of $\mathcal E_\beta$ is positive almost everywhere in $\Rd$, bounded above, locally bounded below, continuous, and satisfies  \eqref{eqhmub} almost everywhere in $\Rd$.
Moreover, 
we have the following asymptotics
\begin{align}
\label{condinfty}
\left\{
\begin{aligned}
& \lim_{|x| \to \infty} \left(\frac{ h^{\mub}(x)}{\log |x|} +1 \right)= 0 &&  \mbox{in} \ \d=2, \\ 
&
\lim_{|x| \to \infty} h^{\mub}(x)=0 && \mbox{in} \ \d > 2.
\end{aligned}
\right.
\end{align}
\end{lem}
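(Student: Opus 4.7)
The plan is to derive the Euler--Lagrange equation \eqref{eqhmub} by a first variation, establish strict positivity of $\mub$ via an entropy-singularity argument, and then extract the remaining qualitative properties from the resulting pointwise identity $\mub = \exp(\beta(c_\beta - V - h^{\mub}))$.

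First I would perturb $\mub$ by $\mu_t = (1-t)\mub + t\nu$ for $\nu$ an arbitrary compactly supported probability measure of bounded density, and pass to the limit $t \to 0^+$ in the monotone difference quotient $t^{-1}(\mathcal E_\beta(\mu_t) - \mathcal E_\beta(\mub))$; each piece of $\mathcal E_\beta$ is convex in $\mu$, so this is legitimate. Minimality forces the limit to be non-negative, which gives the variational inequality that $h^{\mub} + V + \beta^{-1}\log \mub \ge c_\beta$ a.e., with equality on $\supp \mub$, for some Lagrange constant $c_\beta$.

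To promote this to an a.e.\ equality I would establish $\mub > 0$ a.e.\ by contradiction. If $A := \{\mub = 0\}$ had positive Lebesgue measure, I would choose a smooth probability density $\rho$ supported in $A \cap \{h^{\mub} < M\}$ for some large $M$ (using local integrability of $h^{\mub}$) and set $\nu = \rho\,dx$. Since $\mub$ and $\nu$ have disjoint supports, the entropy of $\mu_t$ splits as
\[
\int \mu_t \log \mu_t\,dx = (1-t)\log(1-t) + (1-t)\int \mub \log \mub\,dx + t\log t + t\int \rho \log \rho\,dx,
\]
whose derivative at $t = 0^+$ contains the divergent contribution $\log t \to -\infty$, while the Coulomb and potential-energy pieces differentiate to the \emph{finite} linear quantity $\int (h^{\mub} + V)\,d(\nu - \mub)$. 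Thus $\mathcal E_\beta(\mu_t) < \mathcal E_\beta(\mub)$ for small $t > 0$, contradicting minimality; hence $\mub > 0$ a.e., and the variational inequality upgrades to \eqref{eqhmub} a.e. This is the main obstacle of the lemma: one must ensure that the $\log t$ singularity is not obscured by singularities elsewhere, which the disjoint-support and truncation choices above guarantee.

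Exponentiating \eqref{eqhmub} gives $\mub = \exp(\beta(c_\beta - V - h^{\mub}))$ pointwise. Since $-\Delta h^{\mub} = \cd\mub \ge 0$, $h^{\mub}$ is superharmonic and locally bounded below by standard potential theory (in fact non-negative in $\d \ge 3$); together with local boundedness of $V$ this bounds $\mub$ above. A short Riesz-potential bootstrap applied to \eqref{eqhmub} then delivers continuity of $h^{\mub}$, and hence of $\mub$, so strict positivity automatically becomes a local lower bound. For the asymptotics \eqref{condinfty}, \eqref{assumpV2} forces $V + h^{\mub} \to \infty$ at infinity and \eqref{assumpV3} then provides an integrable envelope for $\mub$; dominated convergence applied to $h^{\mub}(x) = \int \g(x-y)\,d\mub(y)$ gives $h^{\mub}(x) \to 0$ in $\d > 2$, and in $\d = 2$ the decomposition $\g(x-y) = -\log|x| + \log(|x|/|x-y|)$ combined with $\mub(\R^\d) = 1$ yields $h^{\mub}(x)/\log|x| + 1 \to 0$.
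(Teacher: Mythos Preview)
Your strategy matches the paper's in outline: an entropy-singularity argument for strict positivity, a first variation for the Euler--Lagrange identity, and then boundedness, continuity and asymptotics read off from $\mub = \exp(\beta(c_\beta - V - h^{\mub}))$. The paper orders things slightly differently (positivity first, then \eqref{eqhmub} via multiplicative perturbations $(1+tf)\mub$), but that is cosmetic, and in $\d\ge 3$ your sketch is essentially complete.

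The real gap is in dimension $\d=2$. You repeatedly use that $h^{\mub}$ is locally integrable, or that it is ``superharmonic and locally bounded below by standard potential theory''. But in two dimensions the integral $h^{\mub}(x)=-\int\log|x-y|\,d\mub(y)$ is not of constant sign and is not even well-defined without the moment condition $\int(\log|y|)_+\,d\mub(y)<\infty$; this does \emph{not} follow from $\mub$ being a probability density with finite entropy. The paper devotes its entire Step~1 to extracting this moment bound from the finiteness of $\mathcal E_\beta(\mub)$ (see \eqref{intlogfini} and the argument around \eqref{abo1}). Without it, several of your steps break in $\d=2$: you cannot ensure $\int h^{\mub}\,d\nu>-\infty$ for your competitor in the positivity argument; you cannot get a global upper bound on $\mub$ (one needs $h^{\mub}(x)+(\log|x|)_+$ bounded below, which is how the paper proceeds, not merely $h^{\mub}$ superharmonic); and your decomposition $\g(x-y)=-\log|x|+\log(|x|/|x-y|)$ does not yield the asymptotics since the remainder is not controllable against $d\mub$. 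A smaller point: plain dominated convergence does not apply to $\int\g(x-y)\,d\mub(y)$ because $\sup_x\g(x-y)=+\infty$; one needs the near/far splitting of \eqref{docom}, which uses that $\mub$ is bounded and, in $\d=2$, once again the log moment.
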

\begin{proof}
{\it Step 1.}
We start by showing that $h^{\mub} \in L^1_{\mathrm{loc}}(\R^\d)$.
Let $B$ be a bounded set in $\R^\d$, we have
\begin{equation}\label{hmubb}
\int_B h^{\mub}=\int_{\R^\d}\int_B  \g(x-y) dx d\mub(y).\end{equation}
If $\d \ge 3$ it is then straightforward, by integrability and boundedness of $\g$, that the right-hand side is finite. If $\d=2$, then we first need to show that 
\be
\label{intlogfini}
\int_{\R^2} (\log |x|)_+ d\mub(x)<\infty.
\ee
To do so, we return to \eqref{abooo} and
use that  for each $x \in \R^2 \setminus \{0\}$, the function
\begin{align*}
\phi(\mu):=\( V-(\log |x|)_+\) \mu + \frac{1}{\beta} \mu \log \mu
\end{align*}
is convex and achieves its minimum at $u_0(x)=\exp\(-\beta (V-(\log |x|)_+) -1\)$. Its second derivative is the decreasing function $\phi''(\mu)= \frac{1}{\beta \mu}$.
Thus 
\begin{align}
\label{2der1}
\phi(\mu) 
& 
= \phi(u_0)+ \int_{u_0}^\mu \frac{1}{\beta} \log \frac{y}{u_0} dy
\\ \notag & 
= \phi(u_0)+\frac{1}{\beta}  \( \mu \log \frac{\mu}{u_0}- \mu +u_0\) 
\\ \notag & 
\geq 
\phi(u_0)+ \frac{1}{2\beta} (\mu-u_0)^2 \min \( \mu^{-1}, u_0^{-1}\).
\end{align}
Inserting into~\eqref{abooo} and using~\eqref{2der1} 
and \eqref{assumpV2}, we obtain 
\begin{align}\label{abo1}
\mathcal E_\beta(\mu) 
&
\geq 
-C + 
\int_{\R^2 }  
\phi(\mu_\beta(x))\,dx
\\ \notag & 
\ge -C
-\frac1\beta \int_{\R^2 }
 \exp\left(-\beta(V-(\log|x|)_+)-1\right)\,dx 
\\ \notag  & \qquad 
+ \int_{\mub(x) \ge (1+|x|) u_0(x)  }\frac{1}{\beta}   \mub(x) \( \log (1+|x|)-1\)\,dx
\\ \notag & \qquad 
+ \frac{1}{2\beta} \int_{ 2 u_0(x)   \le  \mub(x) \le (1+|x|) u_0(x)}  \frac{\(  \mub(x) - u_0(x)\)^2 }{  (1+|x|) u_0(x)} .
\end{align}
 We next write  
\begin{align*}
\lefteqn{
\int_{\R^2} (\log |x|)_+ \mub (x) dx 
} \quad & 
\\ & 
\leq 
\int_{\mub (x)\le  2 u_0(x) }
2(\log |x|)_+ \exp\left(-\beta (V-(\log |x|)_+)-1\right) dx
\\ & \quad 
+\int_{ \mub (x)\ge (1+|x|) u_0(x) }  (\log |x|)_+ \mub (x) dx 
\\ & \quad 
+
\int_{2u_0(x) \le \mub(x) \le (1+|x|)u_0(x) } 
(\log |x|)_+ ( (1+|x|) u_0(x) )^{\hal}  \frac{\mub(x) }{\( (1+|x|) u_0(x)\)^{\hal}} dx.
\end{align*}
The first integral on the right-hand side is finite by~\eqref{assumpV3}, the second is finite by finiteness of the terms in \eqref{abo1}  and the third is seen to be finite by applying the Cauchy-Schwarz inequality and using~\eqref{abo1} and~\eqref{assumpV3}. This yields~\eqref{intlogfini}.
It then follows from \eqref{hmubb}
that $h^{\mub}\in L^1_{\mathrm{loc}}(\R^\d)$ for $\d=2$.
\smallskip

{\it Step 2.} We check that $\mub>0$ except on a set of measure zero, as in~\cite{neri,RSY2}. Assuming by contradiction that $\mub=0$ in a bounded set $S$ of positive measure, let us consider $\frac{\mub+\ep\indic_{S}}{1+\ep|S|}$. Let us   expand out  
\begin{align*}
\lefteqn{
\mathcal{E}_\beta \left( \frac{\mub+\ep\indic_S}{1+\ep |S|} \right)
} \qquad & 
\\ & 
 = \mathcal E_\beta(\mub)
-\ep |S|  \( \iint \g(x-y) d\mu_\beta(x) d\mu_\beta(y) + \int V d\mu_\beta+ \frac{1}{\beta} \mub \log \mub\)  
\\ & \qquad 
+\ep \int_S ( h^{\mub} +V) 
+ \frac{|S|}{\beta} \ep \log \ep + O(\ep^2).
\end{align*}
By~Step 1 and the fact that $S$ is bounded, we have that $\int_S h^{\mub}+ V <\infty$. We deduce that 
\begin{equation*}
\mathcal{E}_\beta \left( \frac{\mub+\ep\indic_S}{1+\ep |S|} \right)
\leq
\mathcal{E}_\beta(\mub) + C\ep + \frac{|S|}{\beta} \ep \log \ep,
\end{equation*}
a contradiction with the minimality of $\mub$ if $|S|>0$ when 
$\ep$ is chosen small enough. \smallskip

{\it Step 3.} 
We next check that \eqref{eqhmub} is satisfied. For every smooth  compactly supported function~$f$ such that~$\int f d\mub=0$ and~$t\in\R$ with $|t|$ sufficiently small, $(1+t f) \mub$ is a probability measure and we may expand 
 $$\mathcal E_\beta(\mub) \le \mathcal E_\beta((1+t f)  \mub) $$
to find 
$$t \int_{\R^\d} (h^{\mub}+ V + \frac{1}{\beta}\log \mub) f d\mub  + O(t^2)\ge 0,$$  where $h^{\mub}$ is defined as in \eqref{hmu} and may  take infinite values.
Since this is true for all small enough $|t|$ and any smooth $f$ with $\int fd\mub=0$,  and since $\mub>0$ almost everywhere, it follows that \eqref{eqhmub} holds almost everywhere, for some constant $c_\beta$.
 \smallskip

{\it Step 4.} Proof of \eqref{condinfty}.
For  dimension~$\d \ge 3$ we have that~$h^{\mub} \ge 0$ hence from~\eqref{eqhmub} and~\eqref{assumpV2} we deduce that~$\mub$ is bounded above.
To prove \eqref{condinfty}, given $\ep>0$, we choose~$R$ such that~$\mub(B_R^c)<\ep$ (which is possible since~$\mub$ is a probability measure). 
We then write 
\begin{align}
\label{docom}
h^{\mub}(x) 
&
= 
\int_{B_R} \g(x-y) d\mub(y) 
+ \int_{B_R^c \cap \{|y-x|\le \eta\}} 
\g(x-y)d\mub(y)
\\ \notag & \qquad 
+ \int_{B_R^c \cap \{|y-x|\ge \eta\}} \g(x-y) d\mub(y).
\end{align}
The first term of the right-hand side tends to zero when~$x\to \infty$ because~$\g$ does, the second term is bounded by~$\|\mub\|_{L^\infty} \int_{B_\eta} \g $ which tends to zero as~$\eta$ tends to zero by integrability of~$\g$ near the origin, and the last term can be bounded by~$\g(\eta) \mub(B_R^c)<\g(\eta) \ep$.
We may then choose~$\eta$ appropriately to make all the three terms be at most~$\ep^{\hal}$ when~$|x|$ is large enough, which proves~\eqref{condinfty} in the case~$\d \ge 3$. 

For~$\d=2$, we use $-\log |x-y| \ge - C - \log \max(|x|, |y|,1)$ and~\eqref{intlogfini} to obtain
\begin{align*}
h^{\mub}(x)+ (\log |x|)_+
&
=
\int_{\R^2} \left(
- \log |x-y| + (\log |x|)_+ \right)\,
d\mu_\beta(y) 
\\ & 
\ge   - C  + \int_{|y|\ge |x|} ( (\log |x|)_+ -(\log |y|)_+)  d\mub(y)
\\ & 
\ge - C.
\end{align*}
Therefore $h^{\mub}+(\log |x|)_+$ is bounded below.
Since $V-(\log |x|)_+$ is  also bounded below by~\eqref{assumpV2}, we deduce from~\eqref{eqhmub} that~$\mub$ is bounded above, and then we can finish the proof as in dimension $\d \ge 3$  from the decomposition \eqref{docom}, using~\eqref{intlogfini}. \smallskip

{\it Step 5.} Continuity.
The computations of the previous step starting from \eqref{docom} show that  $h^{\mub}$ is locally  bounded  above, and so is $V$. It then follows from \eqref{eqhmub} that $\mub$ is locally bounded below.
Once we have shown that~$\mub$ is locally bounded above and below by positive constants, we may rewrite \eqref{eqmub} as a  uniformly elliptic equation for $\mub$:
$$\div \frac{\nab \mub}{\mub}= \beta(\cd \mub-\Delta V).$$
By standard elliptic regularity theory (for instance \cite{GT}), we thus deduce that $\mub$ is as regular as $V$, in particular $\mub$ is continuous.
\end{proof}

\section{The comparison principle and upper bound on \texorpdfstring{$\mub$}{mu beta}}\label{sec3}
\subsection{A preliminary lemma}
We will use the following comparison principle for the obstacle problem in the whole plane.
\begin{lem} \label{l.comp}
Suppose that $v,w $ are two continuous function in $\R^2$ which satisfy
\begin{equation}\label{e.compin}
\min\left\{ -\Delta v, v - (c_\infty - V) \right\} \leq 0 \leq \min\left\{ -\Delta w, w - (c_\infty-V) \right\} \quad \mbox{in} \ \R^2
\end{equation}
as well as
\begin{equation}
\label{e.growth}
\limsup_{|x| \to \infty} \, \frac{v(x)}{\log |x|} \leq -1 \leq \liminf_{|x| \to \infty} \, \frac{w(x)}{\log |x|}.
\end{equation}
Then $v \leq w$ in $\R^2$.
\end{lem}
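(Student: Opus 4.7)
The plan is to argue by contradiction, assuming that the open set $U := \{v > w\}$ is nonempty, and to derive a contradiction via a Phragm\'en-Lindel\"of-type principle in the plane.

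First I would establish the PDE structure on $U$. For any $x \in U$, the supersolution hypothesis gives $w(x) \geq c_\infty - V(x)$, hence $v(x) > c_\infty - V(x)$; by continuity this strict inequality persists in a neighborhood. The subsolution condition~\eqref{e.compin} then forces $-\Delta v \leq 0$ on $U$ (in the viscosity sense), since the ``obstacle branch'' of the min is strictly positive. Combined with $-\Delta w \geq 0$ (which holds globally from the supersolution condition), the function $u := v - w$ is subharmonic on $U$. By continuity of $v$ and $w$, one has $u = 0$ on the topological boundary $\partial U$.

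Next I would extract the growth information from~\eqref{e.growth}: for every $\varepsilon > 0$ there exists $R_\varepsilon$ such that $v(x) \leq (-1 + \varepsilon)\log |x|$ and $w(x) \geq (-1 - \varepsilon)\log |x|$ for $|x| \geq R_\varepsilon$, and subtracting yields $u(x) \leq 2\varepsilon \log |x|$ on that range. In particular, $\limsup_{|x| \to \infty} u(x)/\log |x| \leq 0$.

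The crux is then to conclude $u \leq 0$ on $U$ from the three facts: $u$ is subharmonic on the open set $U$, vanishes on $\partial U$, and grows sublogarithmically at infinity. To carry this out, I would fix a point $y_0 \in \R^2 \setminus \overline U$ and, for $\varepsilon > 0$, introduce the penalized function $u_\varepsilon(x) := u(x) - \varepsilon \log |x - y_0|$. Since $\log|x - y_0|$ is harmonic on $U$, $u_\varepsilon$ is still subharmonic there; by the growth step, $u_\varepsilon(x) \to -\infty$ as $|x| \to \infty$ as soon as $\varepsilon$ exceeds the growth constant; and on $\partial U$, where $u = 0$, the quantity $-\varepsilon\log|x - y_0|$ is bounded by $C\varepsilon$ on any fixed compact set. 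Applying the classical maximum principle on $U \cap B_R$ for large $R$, sending $R \to \infty$, and then $\varepsilon \to 0$, I would deduce $u \leq 0$ on $U$, contradicting the definition of $U$. The case $\overline U = \R^2$ must be treated separately: there $u$ is subharmonic on all of $\R^2$ with sublogarithmic growth, so a Liouville-type argument (together with the fact that the growth hypothesis on $v$ alone forces $v \to -\infty$ like $-\log|x|$, which precludes $v \equiv w + c$ with $c > 0$) yields the contradiction directly.

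The main obstacle is the Phragm\'en-Lindel\"of step: balancing the subharmonicity of $u_\varepsilon$, the possibly negative values of $\log|x - y_0|$ for $|x - y_0| < 1$ on $\partial U$, and the decay at infinity, while keeping estimates uniform enough to close the limit $\varepsilon \to 0$. The ancillary case $\overline U = \R^2$ is also delicate and relies essentially on the asymmetry of the growth hypotheses on $v$ and $w$ (namely that $v/\log|x|$ is controlled from above by $-1$, not just bounded), rather than on $u$ alone.
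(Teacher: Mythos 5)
Your main argument (when $\R^2\setminus\overline U\neq\emptyset$) is sound and takes a genuinely different route from the paper: you run a Phragm\'en--Lindel\"of penalization $u-\varepsilon\log|x-y_0|$ on the (possibly unbounded) set $U=\{v>w\}$, using only $\limsup u/\log|x|\le 0$, whereas the paper first normalizes $\phi=c_\infty-V\le 0$, replaces $w$ by $\min\{tw,0\}$ with $t<1$ to create a strict gap in the growth rates at infinity (which makes $\{v>w\}$ bounded), and then reaches a contradiction through the strong maximum principle, concluding that $v-w$ is constant, hence $v$ is harmonic and bounded above, hence constant, contradicting the growth condition. Your penalization buys you a direct treatment of the unbounded set; the paper's truncation trick buys compactness and an elementary endgame.

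However, your handling of the residual case $\overline U=\R^2$ contains a genuine error. After Liouville gives $u\equiv c>0$, i.e.\ $v\equiv w+c$, you claim this is precluded because the growth hypothesis forces $v\to-\infty$ like $-\log|x|$. That is not a valid exclusion: nothing prevents $w$ from having exactly the same asymptotics (indeed in the intended application both $v$ and $w$ behave like $-\log|x|$, since they are potentials of probability measures), so $v\equiv w+c$ is fully compatible with \eqref{e.growth}; the condition $\limsup v/\log|x|\le -1\le\liminf w/\log|x|$ only pins both ratios to the value $-1$. The contradiction must instead come from the PDE: if $v\equiv w+c$ with $c>0$, then $v>w\ge c_\infty-V$ everywhere, so \eqref{e.compin} forces $-\Delta v\le 0$ on all of $\R^2$, while $v=w+c$ is also superharmonic; hence $v$ is harmonic, and it is bounded above by \eqref{e.growth}, so Liouville makes $v$ constant, which finally contradicts $\limsup v/\log|x|\le-1$. (This is precisely the paper's closing argument.) With that replacement---and a sentence justifying that in this case $u=\max(u,0)$ is subharmonic on all of $\R^2$ via the sub-mean-value property at points where $u=0$, plus the standard convexity-of-$\max_{|x|=r}u$ in $\log r$ form of Liouville for sublogarithmic subharmonic functions---your proof is complete.
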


\begin{proof}[{Proof of Lemma~\ref{l.comp}}]
Let $\phi= c_\infty-V$ be the obstacle function.
We may assume without loss of generality that $\phi\leq 0$ (otherwise we may subtract a constant). Then $v \leq 0$ by the maximum principle, since the zero function is a harmonic function which, due to~\eqref{e.growth}, is larger than $v$ in the complement of a bounded set. Moreover, $\min\{ tw,0\}$, with $0 < t \leq1$, satisfies the same assumptions as $w$, and thus it suffices to show that $v \leq tw$ for every $0<t < 1$. In light of this, we may assume that
\begin{equation*}\label{}
\limsup_{|x| \to \infty} \, \frac{v(x)}{\log |x|} \leq -1 <  \liminf_{|x| \to \infty} \, \frac{w(x)}{\log |x|}.
\end{equation*}
In particular, $\{ v > w\}$ is bounded. Observe also that $ \{ v > w\} \subseteq \{ v > \phi \}$. Since $v$ is subharmonic in the latter and $w$ is superharmonic in $\R^2$, we deduce that $v-w$ is subharmonic in $\{ v - w > 0\}$. Assume that this set is nonempty, to get a contradiction. Let $x_0$ be the point at which $v-w$ attains its global maximum,  say $M:= (v-w)(x_0) = \sup_{\R^2} (v-w)$. Then, since $v-w$ is subharmonic at $x_0$, we deduce that it is constant in a neighborhood of $x_0$. In fact, this argument shows that the set $\{ v-w = M\}$ is open; since $v-w$ is continuous, it is also closed. Since $\{ v-w=M\} \neq \emptyset$, we must have that $v-w\equiv M$. Thus $v$ and $w$ are harmonic. Since $v$ is bounded above, it must be constant. This violates the growth condition.
\end{proof}

\subsection{Main proof}
We now turn to the main comparison result of this section.
\begin{lem}\label{lem24}
Let $m_\beta=\sup_{\Rd} \mub$.  If $\beta$ is large enough, we have
\be \label{compp0}
- \frac{\log m_\beta}{\beta} \le h^{\mub}-c_\beta  - ( h^{\mu_\infty}- c_\infty) .\ee
\end{lem}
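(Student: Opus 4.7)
The claim is equivalent to $h^{\muv} - c_\infty \le h^{\mub} - c_\beta + \frac{\log m_\beta}{\beta}$, so I would set
$w := h^{\mub} - c_\beta + \frac{\log m_\beta}{\beta} + c_\infty$
and aim to prove $w \ge h^{\muv}$ pointwise on $\R^\d$. My plan is to recognize $w$ as a supersolution of the same classical obstacle problem \eqref{op} that $h^{\muv}$ solves, with obstacle $\phi := c_\infty - V$, and then conclude by a comparison principle.

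For the supersolution property, I would use the Euler--Lagrange identity \eqref{eqhmub} to rewrite $h^{\mub} + V - c_\beta = -\frac{1}{\beta} \log \mub$, which gives
\[
w - (c_\infty - V) = h^{\mub} + V - c_\beta + \frac{\log m_\beta}{\beta} = \frac{1}{\beta} \log \frac{m_\beta}{\mub} \ge 0,
\]
the last step being a tautology once one unpacks $m_\beta = \sup \mub$. Combined with $-\Delta w = \cd \mub \ge 0$, this yields the supersolution inequality $\min(-\Delta w, w - \phi) \ge 0$, while $h^{\muv}$ is a subsolution by \eqref{op}. In dimension $\d = 2$, the growth hypothesis \eqref{e.growth} of Lemma~\ref{l.comp} is satisfied: $h^{\muv}/\log|x| \to -1$ is classical for the logarithmic potential of a compactly supported probability measure, and $w/\log|x| \to -1$ follows from \eqref{condinfty} since the bounded additive shift is negligible at the logarithmic scale. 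Lemma~\ref{l.comp} then delivers $h^{\muv} \le w$. For $\d \ge 3$ the same scheme works via a direct maximum-principle argument on $\Sigma^c$: there $h^{\muv}$ is harmonic (since $\muv$ is supported in $\Sigma$), so $w - h^{\muv}$ is superharmonic; it is $\ge 0$ on $\Sigma$ by the identity above (since $h^{\muv} = c_\infty - V$ there); and its behavior at infinity is controlled by the decay of both $h^{\mub}$ and $h^{\muv}$ given by \eqref{condinfty}.

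The one genuinely nontrivial step is spotting the algebraic identity $w - (c_\infty - V) = \frac{1}{\beta} \log(m_\beta/\mub)$: it is this identity that converts the pointwise bound $\mub \le m_\beta$ into a supersolution inequality, after which the comparison principle supplies the rest with no further analytical work. The technical hypothesis that $\beta$ be large enough enters only to secure the continuity and positivity of $\mub$ needed to invoke Lemma~\ref{l.comp}, both of which are guaranteed by Lemma~\ref{lem22}.
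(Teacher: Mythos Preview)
Your setup and the $\d=2$ case are essentially the paper's own argument: the identity $w-(c_\infty-V)=\frac1\beta\log(m_\beta/\mub)\ge 0$ together with $-\Delta w=\cd\mub\ge 0$ makes $w$ a supersolution, and Lemma~\ref{l.comp} closes the comparison.

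The gap is in $\d\ge 3$. Your maximum principle on $\Sigma^c$ requires a sign condition at infinity, and \eqref{condinfty} alone does not supply it. Indeed $h^{\mub}\to 0$ and $h^{\muv}\to 0$, so
\[
w-h^{\muv}\ \longrightarrow\ L:=c_\infty-c_\beta+\frac{\log m_\beta}{\beta}\qquad (|x|\to\infty),
\]
and you need $L\ge 0$ to run the maximum principle for a superharmonic function on the unbounded domain $\Sigma^c$. Nothing you cite forces $L\ge 0$; this is precisely the nontrivial step in the paper. There one argues by contradiction: assuming $L<0$, compare $\varphi:=w-h^{\muv}$ with the harmonic function $\psi$ in $\Sigma^c$ satisfying $\psi=0$ on $\partial\Sigma$ and $\psi\to L$ at infinity. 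Since $\varphi\ge 0$ on $\partial\Sigma$, is superharmonic in $\Sigma^c$, and $\varphi-\psi\to 0$ at infinity, one gets $\varphi\ge\psi$. But $\psi-L$ decays like $|x|^{2-\d}$ (Green-type decay), whereas $\varphi-L=h^{\mub}-h^{\muv}$ decays like $|x|^{1-\d}$ because $\int(\mub-\muv)=0$; for large $|x|$ this contradicts $\varphi\ge\psi$. Hence $L\ge 0$, and then your maximum-principle step goes through. Without this decay-rate argument (or an equivalent device), the $\d\ge 3$ case is incomplete.

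A minor remark: Lemma~\ref{lem22} already gives continuity and positivity of $\mub$ for all $\beta$ under \eqref{assumpV1}--\eqref{assumpV3}, so ``$\beta$ large'' is not what secures those properties.
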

\begin{proof}
To compare $h^{\mub} -c_\beta $ and $h^{\muv}-c_\infty$ we recall that $h^{\mub} $ satisfies \eqref{eqhmub} while $h^{\muv}$ satisfies  \eqref{op}.
We may write from \eqref{eqhmub} that 
 \be\label{psot}
 h^{\mub}+V-c_\beta +\frac{\log m_\beta}{\beta}\ge 0 \ee
It follows that 
\be \label{110}
\min \(h^{\mub}+V-c_\beta +\frac{\log m_\beta}{\beta}, -\Delta h^{\mub}\) \ge 0.\ee

 In dimension $\d=2$, applying the  comparison principle of Lemma \ref{l.comp}  to $ h^{\mub} +c_\infty-c_\beta+ \frac{m_\beta}{\beta}   $ and $h^{\muv}$ 
 we deduce that 
$$h^{\mub}+c_\infty-c_\beta + \frac{\log m_\beta}{\beta}\ge h^{\muv},$$
which is the desired result.
For  dimension $\d\ge 3$, we first  show that we have \be \label{limif}
\liminf_{|x|\to \infty} \( h^{\mub}+c_\infty-c_\beta + \frac{\log m_\beta}{\beta} \)\ge 0\ee which is  equivalent by \eqref{condinfty} to showing that 
 $c_\infty-c_\beta + \frac{\log m_\beta}{\beta}\ge 0$.
 
To do so, by contradiction assume that $c_\infty-c_\beta + \frac{\log m_\beta}{\beta}<0$ and let us  consider $\psi$ harmonic in $\R^\d \backslash \Sigma$  such that $\psi=0$ on $\pa \Sigma$ and $\psi= c_\infty-c_\beta + \frac{\log m_\beta}{\beta}$ at $\infty$. Because $c_\infty-c_\beta + \frac{\log m_\beta}{\beta}<0$, we have that
$\psi-  \( c_\infty-c_\beta + \frac{\log m_\beta}{\beta}\)$  decays at infinity like the Green's function, i.e. like $|x|^{2-\d}$.
On the other hand, setting 
$$\varphi:= h^{\mub} -h^{\muv} +c_\infty-c_\beta+ \frac{\log m_\beta}{\beta},$$
by \eqref{psot} and \eqref{op} we have
\be \label{propriphi}\left\{ 
\begin{array}{ll}
\varphi  \ge 0 & \text{ in} \  \Sigma\\
\Delta \varphi \le 0 &  \text{in} \ \R^\d\backslash \Sigma\end{array}\right.\ee
 It then follows that 
 $-\Delta (\varphi-\psi) \ge 0$ in $\R^\d\backslash \Sigma$ with $\varphi-\psi \to 0$ at $\infty$ and $\varphi-\psi \ge 0$ on $\pa \Sigma$. Thus by the maximum principle $\varphi-\psi \ge 0$ in $\R^\d \backslash \Sigma$. 
 On the other hand, since $-\int_{\Rd} \Delta \varphi= \cd\int_{\R^\d} \mub-\muv=0$ we also have that $\varphi- \(c-c_\beta+ \frac{\log m_\beta}{\beta}\)$  decays at infinity like $|x|^{1-\d}$.  This, the fact that $\psi-  \( c-c_\beta + \frac{\log m_\beta}{\beta}\)$  decays at infinity like  $|x|^{2-\d}$, and the fact that $\varphi \ge \psi$ bring a contradiction, which shows that $\liminf_{|x|\to \infty} \varphi \ge 0$.
Since \eqref{propriphi} holds in any case, we then deduce by the maximum principle that $\varphi \ge 0$ in all $\R^\d$, which is the desired result.
\end{proof}
We deduce the following  bounds on $\mub$.
\begin{lem}
\label{lub}
For every $x\in \R^\d$ and $\beta \ge 1$, we have 
\be 
\label{bornelmub}
0<\mub(x) \le \begin{cases} \min (C, C\exp\left( -\beta (V(x)-C)\right) & \text{for} \ \d\ge 3\\
\min(C, C \exp\(- \beta (V(x)-  \log |x|-C) \)  & \text{for} \ \d=2. \end{cases}
\ee 
\end{lem}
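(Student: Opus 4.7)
The plan is to combine the Euler--Lagrange equation~\eqref{eqhmub} with the comparison estimate of Lemma~\ref{lem24} to obtain a pointwise bound of the form $\mub \le m_\beta \exp(-\beta \zeta)$, and then to bound $m_\beta$ uniformly in $\beta$ by a maximum principle argument applied to~\eqref{eqmub}.

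First I would solve~\eqref{eqhmub} explicitly as $\mub(x) = \exp\!\bigl(\beta(c_\beta - h^{\mub}(x) - V(x))\bigr)$. Lemma~\ref{lem24} gives $c_\beta - h^{\mub}(x) \le c_\infty - h^{\muv}(x) + \beta^{-1}\log m_\beta$, and plugging this into the previous identity yields
\begin{equation*}
\mub(x) \le m_\beta \exp(-\beta\zeta(x)),
\end{equation*}
where $\zeta = V+h^{\muv}-c_\infty \ge 0$ is the obstacle function of~\eqref{defzeta}. This reduces the lemma to (a) the uniform bound $m_\beta \le C$ and (b) a lower bound of $\zeta$ by $V-C$, respectively $V-\log|x|-C$, at infinity.

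For (a), observe first that $\mub(x)\to 0$ as $|x|\to\infty$: indeed, \eqref{eqhmub}, the coercivity~\eqref{assumpV2} of $V$, and the asymptotics~\eqref{condinfty} of $h^{\mub}$ together imply $\beta(c_\beta-h^{\mub}-V)\to-\infty$. Hence $m_\beta$ is attained at some $x_0\in\Rd$. The display above evaluated at $x_0$ forces $e^{-\beta\zeta(x_0)}\ge 1$, so $\zeta(x_0)=0$ and by~\eqref{defomega} we have $x_0\in\Sigma$. Since $\log\mub$ attains its maximum at the same point, $\Delta\log\mub(x_0)\le 0$, and~\eqref{eqmub} at $x_0$ then gives $\cd\, m_\beta \le \Delta V(x_0) \le \sup_{\Sigma}\Delta V$, which is finite because $\Sigma$ is compact and $V\in C^2$.

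For (b), when $\d\ge 3$ one simply uses $h^{\muv}\ge 0$ (as $\g\ge 0$) to get $\zeta\ge V-c_\infty$, which together with (a) proves the bound. When $\d=2$, letting $R$ be such that $\supp\muv\subset B_R$ and applying Jensen's inequality to the convex function $-\log$ gives $h^{\muv}(x)\ge -\log(|x|+R)$, and therefore $\zeta(x)\ge V(x)-\log|x|-C$ whenever $|x|\ge 1$; for $|x|\le 1$ the uniform bound $\mub\le m_\beta\le C$ suffices, so in either case the $\min$ in the statement absorbs both regimes. The only mild obstacle is this two-dimensional case, where one must keep track of the logarithmic growth of $\g$ at infinity and check that the $\min$ on the right-hand side of~\eqref{bornelmub} correctly handles the bounded region.
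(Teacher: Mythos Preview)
Your argument is correct and follows essentially the same route as the paper: combine the comparison of Lemma~\ref{lem24} with the Euler--Lagrange identity~\eqref{eqhmub} to reduce to bounding $m_\beta$, then apply the maximum principle to~\eqref{eqmub} at the point where $\mub$ attains its maximum. The only notable difference is in the localization of the maximum: the paper uses the growth condition~\eqref{assumpV2} to trap the maximizer in a fixed ball $B_R$ and bounds $m_\beta$ by $\cd^{-1}\max_{B_R}\Delta V$, whereas you use the sharper observation that $\zeta(x_0)=0$ forces $x_0\in\Sigma$, yielding $m_\beta\le\cd^{-1}\sup_\Sigma\Delta V$. One small correction: the inclusion you need is $\{\zeta=0\}\subset\Sigma$, which is the reverse of~\eqref{defomega}; it holds under the standing assumption~\eqref{assumpV4} (or~\eqref{assumpV5}), as noted just after~\eqref{defomega} in the paper.
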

\begin{proof}
Let us now turn to the upper bound.
With the result of \eqref{compp0} and bounds on $h^{\muv}$, we have
$$h^{\mub}(x) -c_\beta \ge -\max(1, \log |x|)\indc_{\d=2}- C -\frac{\log m_\beta}{\beta}.$$
Inserting into \eqref{eqhmub} we deduce that 
\be\label{tu}
\log \mub=  \beta(c_\beta - h^{\mub}) - \beta V\le \beta  \max(1, \log |x|)  \indc_{\d=2}+\beta C +\log m_\beta - \beta V.\ee In view of \eqref{assumpV2}, there thus exists $R>0$ independent of $\beta$ such that if $x\in \Rd\backslash B_R$ we have  $\log \mub< \log m_\beta -1$ (and recall that $m_\beta<\infty$ by Lemma \ref{lem22}). This, with the fact that $\mub$ is continuous, implies that $\sup_{\R^\d} \mub$ must be a maximum, and it must be achieved at some point 
 $x_\beta $ in $B_R$. Then we must  have $\Delta \log \mub(x_\beta) \le 0$,  hence by \eqref{eqmub}
$$\cd \mub(x_\beta)-\Delta V(x_\beta) \le 0.$$
We may then deduce that 
$$m_\beta = \mub (x_\beta) \le \frac{1}{\cd} \max_{B_R} \Delta V$$
i.e. that $m_\beta $ is bounded independently of $\beta$. The first bounds in the right-hand side of \eqref{bornelmub} follow.
The bound in \eqref{tu} then gets improved to 
$$\log \mub \le  \beta  \max(1, \log |x|)  \indc_{\d=2}+\beta C - \beta V$$
which yields the second set of bounds in \eqref{bornelmub}.

\end{proof}

\begin{prop}
\label{lemcomp}
There exists $C>0$ (depending only on $V$ and $\d$) such that if $\beta$ is large enough, we have 
\be \label{compp}
- \frac{C}{\beta} \le h^{\mub}-c_\beta  - ( h^{\mu_\infty}- c_\infty)\le  \frac{C \log \beta}{\beta} .\ee
\end{prop}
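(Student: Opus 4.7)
The lower bound $-C/\beta \le w$, where $w := h^{\mub}-c_\beta-(h^{\muv}-c_\infty)$, is immediate from Lemma~\ref{lem24} (giving $w \ge -\beta^{-1}\log m_\beta$) together with the uniform bound $m_\beta \le C$ established in Lemma~\ref{lub}, since then $-\beta^{-1}\log m_\beta \ge -\beta^{-1}\log C$.

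For the upper bound $w \le C\log\beta/\beta$, the key observation is the universal identity
\[
w = -\frac{1}{\beta}\log\mub - \zeta \quad \text{on } \R^\d,
\]
obtained by subtracting $\zeta = h^{\muv}+V-c_\infty$ from \eqref{eqhmub}. Since $\zeta \ge 0$, it suffices to show the crude lower bound $\mub(x) \ge \beta^{-C}$ in a neighborhood of $\Sigma$ (and, by the tail analysis from Lemma~\ref{lub}, to show that outside this neighborhood $V$ is large enough that $\zeta$ absorbs the logarithmic term). The $\log\beta$ factor appears precisely because this preliminary lower bound is only polynomial in $\beta$.

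To obtain this lower bound, I would work with the semilinear equation $\Delta\log\mub=\beta(\cd\mub-\Delta V)$. In the open set $\{\Delta V\ge \alpha\}$, which contains $\Sigma$ by \eqref{assumpV4}, any interior local minimum $x_0$ of $\log\mub$ must satisfy $\Delta\log\mub(x_0)\ge 0$, hence $\cd\mub(x_0)\ge\Delta V(x_0)\ge\alpha$, so $\mub(x_0)\ge \alpha/\cd$. Consequently, the infimum of $\mub$ over $\{\Delta V\ge\alpha\}$ is either bounded below by $\alpha/\cd$ or else attained on the boundary. On the boundary of this region, which lies at positive distance from $\Sigma$, one exhibits a lower bound by comparing $\mub$ with an explicit constant subsolution $\underline{\mu}:=\beta^{-C}$ (which is a subsolution wherever $\Delta V\ge\alpha$, since $\Delta\log\underline{\mu}=0\ge \beta(\cd\beta^{-C}-\alpha)$ for $\beta$ large) via the comparison principle for the semilinear equation; the boundary data for this comparison is in turn obtained by using $\int\mub=1$, the uniform upper bound $\mub\le C$, and the exponential tail decay of Lemma~\ref{lub} to exhibit a point where $\mub$ is already bounded below by a $\beta$-independent constant.

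The main obstacle is this last step: producing a first, $\beta$-independent pointwise lower bound on $\mub$ and then propagating it to a full neighborhood of $\Sigma$ without yet having the barrier construction of Section~\ref{sec4}. Once this is done, one distinguishes the cases where the supremum of $w$ is attained in the interior of $\Sigma$ (where $w\le C/\beta$ by the maximum principle and $\muv\ge\alpha/\cd$), in a neighborhood of $\pa\Sigma$ (where the lower bound $\mub\ge\beta^{-C}$ gives $w\le C\log\beta/\beta$), or at infinity (where one uses the decay from Lemma~\ref{lem22} and the already-established lower bound on $w$ to control $c_\infty-c_\beta$).
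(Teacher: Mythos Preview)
Your lower bound matches the paper exactly.

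For the upper bound, your route is genuinely different from the paper's, and the gap you flag is real and, I believe, fatal as stated. The pointwise lower bound $\mub\ge\beta^{-C}$ on $\Sigma$ is \emph{equivalent} to the estimate you are trying to prove: since $\zeta=0$ on $\Sigma$, your identity reads $w=-\beta^{-1}\log\mub$ there, so $w\le C\log\beta/\beta$ on $\Sigma$ is exactly $\mub\ge\beta^{-C}$. Your interior-minimum argument only says that any interior local minimum of $\mub$ in $\{\Delta V\ge\alpha\}$ has value $\ge\alpha/\cd$; but since we already know (from the lower bound $w\ge -C/\beta$) that $\mub\le Ce^{-c\beta}$ on $\partial\{\Delta V\ge\alpha\}$, the global minimum on this set is certainly attained on its boundary and is exponentially small. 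Nothing prevents $\mub$ from being small all the way across $\Sigma$, with no interior local minimum at all. The proposed comparison with the constant subsolution $\beta^{-C}$ requires boundary data $\mub\ge\beta^{-C}$, which is precisely what is missing. There is a second gap: even granting $\mub\ge\beta^{-C}$ on a neighborhood $U$ of $\Sigma$, your case split does not cover $\sup w$ being attained in the interior of $\Sigma^c\setminus U$; there $w$ is superharmonic ($-\Delta w=\cd\mub\ge0$), so the maximum principle gives a \emph{lower}, not upper, bound from $\partial U$ and infinity.

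The paper circumvents both issues by never asking for a pointwise lower bound on $\mub$. Instead it sets $E:=\{\mub<\beta^{-2}\}$ and uses only the \emph{upper} bound on $\mub$ (Lemma~\ref{lub}) together with the tail integrability to get $\mub(E)\le C/\beta$. It then subtracts from $h^{\mub}$ the potential $w=\g*\bigl(\mub\indc_E-\frac{\mub(E)}{|\R^\d\setminus E|}\indc_{\R^\d\setminus E}\bigr)$, which has total mass zero and hence $|w|\le C/\beta$. The modified function $v:=h^{\mub}-c_\beta+c_\infty-\tfrac{2}{\beta}\log\beta-w-C\beta^{-1}$ has $-\Delta v$ supported on $\R^\d\setminus E$, where by construction $\mub\ge\beta^{-2}$ and thus $v+V-c_\infty\le0$. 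Hence $\min(v+V-c_\infty,-\Delta v)\le0$ everywhere, and the obstacle-problem comparison (Lemma~\ref{l.comp} in $\d=2$, the harmonic-comparison argument of Lemma~\ref{lem24} in $\d\ge3$) yields $v\le h^{\muv}$ directly. The lower bound $\mub\ge\beta^{-C}$ on $\Sigma$ (equation~\eqref{minosig}) is then derived as a \emph{corollary} of~\eqref{compp}, not as an input.
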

\begin{proof}
The lower bound is  an immediate consequence of \eqref{compp0} and \eqref{bornelmub}. Let us turn to the upper bound.

 We know that 
$$\min(h^{\mub}-c_\beta+V, -\Delta h^{\mub}) = \min \(-\frac1\beta \log \mub, \mub\).$$
If the right-hand side were $\le 0$ we could directly conclude by  comparison principle.  Instead,  we need to modify our test function slightly.
To that end, let us define 
$$E:= \{x\in \R^\d: \mub(x) < \beta^{-2}\}.$$
Let us estimate $\mub(E)$: using \eqref{bornelmub} and \eqref{assumpV3},  we find that 
\begin{align}\label{alig}  \mub(E) &\le C \int_E \beta^{-1} \( \exp \( - \frac{\beta}{2}(V-C) \) \wedge 1\) \\
\notag & \le C \beta^{-1}\end{align}
or respectively using $V - \log |x|-C$ in dimension $2$.
Since  $\mub(\R^\d)=1$ and $\mub \le C$, it also follows that if $\beta$ is large enough,
\be \label{214}|\R^\d\backslash E| \ge \frac{1}{C}.\ee

Let now $w$ be 
\be w:=\g* \(\mub \indic_E -\frac{ \mub(E) }{|\Rd \setminus E|}\indic_{\Rd \setminus E}\) \ee
  This way $w$ decays  like $|x|^{1-\d}$ in  all dimensions $\d \ge 2$, and in view of \eqref{alig}  we have 
\be\label{bornew}
\forall x \in \R^\d, \quad |w(x)| \le C \beta^{-1}.\ee Let us then set
$$v:= h^{\mub}-c_\beta+ c_\infty - \frac{2}{\beta}\log \beta - w- C \beta^{-1}$$
for the $C$ of \eqref{bornew}.
Observe that  
\begin{equation*}
-\frac{1}{\cd}\Delta v = \mu_\beta \indic_{\Rd \setminus E}  + \frac{ \mub(E) }{|\Rd \setminus E|}\indic_{\Rd \setminus E}     \quad \mbox{in} \ \Rd. 
\end{equation*}
By choice of $E$, \eqref{eqhmub} and \eqref{bornew}, we have in $\Rd \setminus E$,
\begin{align}
\label{align}
v+V-c_\infty& = h^{\mub}+V - c_\beta- \frac{2}{\beta} \log \beta -w - C \beta^{-1}
\\ \nonumber
& = -\frac1\beta \log \mub - \frac{2}{\beta} \log \beta -w - C \beta^{-1} \le 0.
\end{align} It follows that 
\be 
\min(v+V-c_\infty, -\Delta v) \le 0  
\quad \mbox{in} \ \Rd.\ee
In dimension $\d=2$ the comparison principle of Lemma \ref{l.comp} allows to conclude that $ v \le h^{\muv}$ which yields the desired upper bound for $h^{\mub}$. 
Let us now turn to dimension $\d \ge 3$.
Setting 
$$\varphi:= h^{\muv} -v,$$ by \eqref{align} and \eqref{op}
we have  
\be \label{frt}
\left\{\begin{array}{ll}
\varphi\ge 0 & \text{ in} \ \Rd \setminus E\\
-\Delta\varphi\ge 0 & \text{ in} \  E.\end{array}\right.\ee
 We also have 
$ \varphi \to c_\beta -c_\infty+\frac{2}{\beta}\log \beta +C \beta^{-1} $ at $\infty.$
Arguing as in the proof of Lemma \ref{lem24}, let $\psi$ be a harmonic function equal to zero on $\pa E$ and $c_\beta -c_\infty+\frac{2}{\beta}\log \beta+ C \beta^{-1} $ at infinity, we have $ \varphi \ge \psi$ in $E$ and if $ c_\beta -c_\infty+\frac{2}{\beta}\log \beta+ C \beta^{-1}<0$, 
 $\psi $ tends to its limit from above at speed $|x|^{2-\d}$.
 On the other hand  $\int_{\Rd} \Delta \varphi =0$. As in the proof of Lemma \ref{lem24}, we get a contradiction 
  and conclude that $c_\beta -c_\infty+\frac{2}{\beta}\log \beta+ C \beta^{-1} \ge 0$. We then conclude from \eqref{frt} and the maximum principle that $\varphi \ge 0$ everywhere, which yields the desired result.  
 \end{proof}

We deduce some corollaries.
\begin{lem} There exists $C>0$ (depending only on $V$ and $\d$) such that 
\be \label{expdecay} 
  \mub(x)\le \exp\(-\beta\alpha \min (1,  \dist(x, \Sigma)^2) + C\) ,\ee
 \be \label{minosig} \mub(x) \ge \exp(-C\log \beta) \quad \text{for x in} \ \Sigma,\ee
 \be \label{massmuc}
\mub(\Sigma^c) \le \frac{C}{ \sqrt{\beta}},\ee
and 
\be \label{masslogmuc}
\left|\int_{\Sigma^c} \mub\log \mub \right| \le \frac{C}{\sqrt\beta}.\ee
 \end{lem}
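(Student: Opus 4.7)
The four estimates all follow from combining the two-sided comparison \eqref{compp} of Proposition~\ref{lemcomp} with the identity $\log \mub = -\beta(h^{\mub}+V-c_\beta)$ coming from \eqref{eqhmub} and the quadratic growth \eqref{assumpV5} of $\zeta$. I would prove them in the order stated.

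For \eqref{expdecay}, the lower bound in \eqref{compp} gives $h^{\mub}+V-c_\beta \ge \zeta - C/\beta$, so \eqref{eqhmub} yields $\log\mub \le -\beta\zeta + C$, and \eqref{assumpV5} concludes. For \eqref{minosig}, on $\Sigma$ we have $\zeta=0$, so the upper bound in \eqref{compp} gives $h^{\mub}+V-c_\beta \le C\log\beta/\beta$ there, and \eqref{eqhmub} produces $\log\mub \ge -C\log\beta$. For \eqref{massmuc}, I integrate \eqref{expdecay} and split $\Sigma^c = A_1 \cup A_2$ with $A_1 = \{0<\dist(\cdot,\Sigma)\le 1\}$ and $A_2 = \{\dist(\cdot,\Sigma)>1\}$. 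Coarea applied to the $1$-Lipschitz function $\dist(\cdot,\Sigma)$, using compactness of $\Sigma$ and $\partial\Sigma\in C^{1,1}$ to bound the surface measure of its level sets uniformly for small $t$, gives $\int_{A_1}\mub \le C\int_0^1 e^{-\beta\alpha t^2}\,dt = O(\beta^{-1/2})$. On $A_2$, assumption \eqref{assumpV5} forces $\zeta\ge\alpha$, and factoring $e^{-\beta\zeta}\le e^{-\beta\alpha/2}e^{-\beta\zeta/2}$ together with the integrability hypothesis \eqref{assumpV3} (and the fact that $h^{\muv}$ is bounded above in dimension $\d\ge 3$, or controlled by $-\log|x|$ at infinity in dimension $\d=2$) yields exponential smallness in $\beta$, much smaller than $\beta^{-1/2}$.

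The entropy bound \eqref{masslogmuc} is the delicate step. The naive estimate $|\log\mub|\le\beta\zeta+C\log\beta$ obtained from combining both sides of \eqref{compp} only produces $O(\log\beta/\sqrt\beta)$. To eliminate this spurious logarithmic factor, I would bound $|\int_{\Sigma^c}\mub\log\mub| \le \int_{\Sigma^c}|\mub\log\mub|$ and split according to whether $\mub\le 1$ or $\mub>1$. On $\{\mub\le 1\}\cap\Sigma^c$ I use the elementary inequality $x|\log x|\le C x^{1/2}$ valid for $x\in(0,1]$ (easily checked by writing $x=e^{-t}$): then \eqref{expdecay} gives $\mub^{1/2}\le\exp(-\tfrac12\beta\alpha\min(1,\dist^2)+C/2)$, and the same coarea splitting as in \eqref{massmuc} produces $\int_{\Sigma^c}\mub^{1/2} \le C/\sqrt\beta$. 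On $\{\mub>1\}\cap\Sigma^c$, \eqref{expdecay} forces $\zeta\le C/\beta$ and hence $\dist(\cdot,\Sigma)\le C\beta^{-1/2}$, a slab of volume $O(\beta^{-1/2})$, on which $|\mub\log\mub|$ is uniformly bounded since $\mub\le e^C$ globally. The main obstacle is precisely this last step: the square-root trick $x|\log x|\lesssim x^{1/2}$ is what allows one to trade the logarithmic singularity at $0$ for an integrable square root and thereby avoid the extraneous $\log\beta$ factor; the other three estimates follow routinely from the comparison principle and coarea integration.
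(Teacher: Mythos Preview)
Your argument is correct. For \eqref{expdecay}, \eqref{minosig}, and \eqref{massmuc} it is essentially identical to the paper's: both derive the pointwise identity $\mub=\exp\bigl(\beta(c_\beta-c_\infty+h^{\muv}-h^{\mub}-\zeta)\bigr)$ from \eqref{eqhmub}, insert the two-sided bound \eqref{compp}, and then integrate via coarea near $\partial\Sigma$ and via \eqref{bornelmub} and \eqref{assumpV3} in the far field.

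For the entropy bound \eqref{masslogmuc} you take a slightly different route. The paper simply invokes ``the behavior of the function $x\log x$'', namely that $x\mapsto -x\log x$ is increasing on $(0,e^{-1})$; hence the upper bound $\mub\le e^{-\beta\alpha s^2+C}$ alone yields $|\mub\log\mub|\le C\beta s^2 e^{-\beta\alpha s^2}$, and one integrates $\int_0^1 \beta s^2 e^{-\beta\alpha s^2}\,ds=O(\beta^{-1/2})$ directly. Your trick $x|\log x|\le Cx^{1/2}$ for $x\in(0,1]$ accomplishes the same thing---avoiding the spurious $\log\beta$ that would come from invoking the lower bound on $\mub$ in \eqref{compp}---but with the advantage that it reduces \eqref{masslogmuc} literally to a repeat of the \eqref{massmuc} computation (with the Gaussian exponent halved), rather than requiring a new integral. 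Both arguments use only the upper bound on $\mub$, which is the essential point. One small caveat: on the far region $A_2$ your factoring for $\mub^{1/2}$ leads to an integrand like $e^{-\beta\zeta/4}$, so you implicitly need \eqref{assumpV3} at a slightly smaller inverse temperature; this is harmless for $\beta$ large enough, which is the regime where \eqref{compp} is available anyway.
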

 \begin{proof}
Taking the exponential  of \eqref{eqhmub} and using \eqref{defzeta}, we find 
\be\label{bmfin0}
\mut=\exp(  \beta ( c_\beta - h^{\mut}-V))=\exp\(\beta (c_\beta-c_\infty+h^{\muv}-h^{\mub}- \zeta)\).
\ee
Inserting \eqref{compp} and \eqref{assumpV5}, we find 
\be \label{ce4}
\exp\(-\beta \zeta (x)-  C \log \beta    \)\le
\mub(x) \le \exp\(-\beta \alpha \min (1, \dist(x, \Sigma)^2 ) + C\).\ee
The upper bound in  \eqref{expdecay} follows, and \eqref{minosig} as well since $\zeta=0$ in $\Sigma$.

For  \eqref{massmuc},  using \eqref{ce4} and  \eqref{bornelmub} as well as \eqref{assumpV3} and  the coarea formula, we   may write 
\begin{equation}\label{345} 
\mub(\Sigma^c) \le C \int_0^\alpha \exp(-\beta \alpha s^2) ds + C \exp\(-\frac{ \beta}{2} \alpha \)\int_{\R^\d}\exp\( - \frac{\beta}{2} (V-C) \wedge 1\) 
\le \frac{C}{\sqrt{\beta}} ,
\end{equation}
or respectively using $V - \log |x|-C$ in dimension $\d=2$.

Arguing in the same way, and using the behavior of the function $x \log x$  we have 
\begin{align*}
\left|\int_{\Sigma^c} \mub\log \mub \right|
&
\le 
C \int_0^\alpha \beta s^2 \exp(-\beta \alpha s^2) ds
\\ & \qquad  
+ C\beta \alpha\exp\(-\frac{ \beta}{2} \alpha \)\int_{\R^\d}\exp\( - \frac{\beta}{2} (V-C) \wedge 1\) 
\\ & 
\le \frac{C}{\sqrt \beta}
\end{align*} 
(respectively with $V-\log |x|$ in dimension 2) 
which proves \eqref{masslogmuc}.
\end{proof}


\section{Study of the radial case and barrier argument}\label{secrad}
 Here we  first specialize to $V(x)=\frac{\lambda}{2} |x|^2$, which will provide a barrier function for the general case.
The problem is then radial and the solution $\mub(x)=e^{u_\beta(|x|)}$ with $u_\beta$ solving in place of \eqref{eqhmub} the ODE
\be \label{ode}\frac{1}{r^{\d-1}} (r^{\d-1} u_\beta ')'= \beta(\cd e^{u_\beta}-\lambda).\ee
By scaling, the coincidence set $\Sigma$ is then a ball of radius $R_\d \lambda^{-1/\d}$, where $R_\d$ only depends on $\d$, more precisely
 $\muv=  \frac{1}{\cd}  \indic_{B(0, R_\d \lambda^{-1/\d}  )}$. 
We first note that at a point of local maximum of $\mub$ we have $\Delta \log \mub\le 0$ hence $\mub \le \frac{\Delta V}{\cd}= \frac{\lambda}{\cd}$.
We thus know that $\cd\mub\le \lambda$ everywhere and thus 
$(r^{\d-1} u_\beta')' \le 0$ and $r^{\d-1} u_\beta '\le 0$ hence $u_\beta$ is nonincreasing.

In view of the exponential decay proved for the general problem in \eqref{expdecay}, for $1\ll K_\beta  \le C \log \beta$, there exists an $r_2$ (depending on $\beta$)  and bounded above by $2 R_\d \lambda^{-1/\d}$  such that  $u_\beta(r_2)= - K_\beta$.

\begin{lem}\label{lem33}
Let  $\eta$ be such that $e^{-K_\beta } \le \eta\le \frac{\lambda}{2\cd}$, and  let $ r_2$  be as above  such that  $u_\beta(r_2) = -K_\beta $. There exists $r_1 \ge r_2 -C \sqrt{\frac{K_\beta +\log \eta}{\beta}}$ (depending on $\beta$) such that 
$$u_\beta(r_1) =\log \eta,$$ with $C$ depending only on $\d$ and $\lambda$.
\end{lem}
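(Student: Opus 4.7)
The plan is to read off the conclusion from a double integration of the radial ODE \eqref{ode}, exploiting the one-sided differential inequality that holds on any interval where $e^{u_\beta} \leq \lambda/(2\cd)$. On such an interval the source term in \eqref{ode} satisfies $\cd e^{u_\beta}-\lambda \leq -\lambda/2$, which is exactly the concavity needed to force $u_\beta$ to drop on the natural scale $1/\sqrt{\beta}$.

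First I would produce $r_1$. Since $u_\beta$ is continuous, nonincreasing, and satisfies $u_\beta(r_2)=-K_\beta \leq \log\eta$ (the latter by the hypothesis $\eta \geq e^{-K_\beta}$), the intermediate value theorem gives an $r_1 \in [0,r_2]$ with $u_\beta(r_1)=\log\eta$ as soon as $u_\beta(0) \geq \log \eta$. This is arranged using the lower bound $\mu_\beta(0)\geq \exp(-C\log\beta)$ from \eqref{minosig} together with the permitted range $K_\beta \leq C\log\beta$ (adjusting the constant $C$ consistently, so that $e^{-K_\beta}\leq \mu_\beta(0)$, while the upper constraint $\eta\leq \lambda/(2\cd)$ is absorbed into the upper bound $\mu_\beta \leq \lambda/\cd$ established earlier at maxima of $\mu_\beta$).

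Now on $[r_1,r_2]$ we have $e^{u_\beta}\leq \eta \leq \lambda/(2\cd)$, so \eqref{ode} yields $(r^{\d-1}u_\beta')'(r) \leq -\tfrac{\beta\lambda}{2}\, r^{\d-1}$. Setting $f(r):=r^{\d-1}u_\beta'(r)$, integrating from $r_1$ to $r$, and using $f(r_1)\leq 0$ (from the monotonicity of $u_\beta$), I obtain $f(r)\leq -\tfrac{\beta\lambda}{2\d}(r^\d-r_1^\d)$. The elementary inequality $r^\d-r_1^\d \geq r^{\d-1}(r-r_1)$ simplifies this to $u_\beta'(r)\leq -\tfrac{\beta\lambda}{2\d}(r-r_1)$ on $[r_1,r_2]$. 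A second integration from $r_1$ to $r_2$ then gives $u_\beta(r_2)-u_\beta(r_1) \leq -\tfrac{\beta\lambda}{4\d}(r_2-r_1)^2$; substituting $u_\beta(r_1)=\log\eta$ and $u_\beta(r_2)=-K_\beta$ and rearranging yields $(r_2-r_1)^2 \leq \tfrac{4\d}{\beta\lambda}(K_\beta+\log\eta)$, which is the claimed bound with $C=\sqrt{4\d/\lambda}$. The main obstacle is the bookkeeping for the existence of $r_1$; the ODE integration itself is routine once one identifies the interval on which the differential inequality is sharp.
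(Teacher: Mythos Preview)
Your double integration of the ODE is correct and essentially identical to the paper's proof: both use that $\cd e^{u_\beta}-\lambda\le -\lambda/2$ on $[r_1,r_2]$, the same elementary inequality $r^{\d}-r_1^{\d}\ge r^{\d-1}(r-r_1)$, and arrive at the same constant $C=\sqrt{4\d/\lambda}$. The only cosmetic difference is that the paper writes the nested integral $u_\beta(r_1)-u_\beta(r_2)=\beta\int_{r_1}^{r_2}t^{1-\d}\int_0^t s^{\d-1}(\lambda-\cd e^{u_\beta})\,ds\,dt$ and discards the nonnegative inner integral over $[0,r_1]$, whereas you integrate the differential inequality $(r^{\d-1}u_\beta')'\le -\tfrac{\beta\lambda}{2}r^{\d-1}$ step by step and invoke $r_1^{\d-1}u_\beta'(r_1)\le 0$; these are exactly the same thing.

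There is, however, a genuine gap in your justification of the existence of $r_1$. You need $u_\beta(0)\ge\log\eta$, i.e.\ $\mu_\beta(0)\ge\eta$, for \emph{all} $\eta\in[e^{-K_\beta},\lambda/(2\cd)]$, hence in particular $\mu_\beta(0)\ge \lambda/(2\cd)$. The estimate \eqref{minosig} only yields $\mu_\beta(0)\ge \beta^{-C}$, which is far too weak for $\eta$ near $\lambda/(2\cd)$; and the clause ``the upper constraint $\eta\le\lambda/(2\cd)$ is absorbed into the upper bound $\mu_\beta\le\lambda/\cd$'' does not make sense, since an upper bound on $\mu_\beta$ cannot furnish the required lower bound on $\mu_\beta(0)$. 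The paper, for its part, simply declares $r_1$ to be the largest $r\le r_2$ with $u_\beta(r)=\log\eta$ without justifying its existence; so this is a point glossed over in both places, but your attempt to fill it does not succeed as written.
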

\begin{proof}Integrating \eqref{ode},  we may write 
\be \label{ode2} r^{\d-1}  u_\beta '(r)=\int_0^r \beta s^{\d-1} (\cd e^{u_\beta (s)}-\lambda) \, ds.\ee
Let $r_1 $ be the largest $r \le r_2$ such that $u_\beta(r_1) = \log \eta$.  For $r \ge r_1$ we have $u_\beta(r)   \le \log \eta \le \log \frac{\lambda}{2\cd}$ hence
\begin{align*} 
\log \eta+ K_\beta 
= u_\beta(r_1)-u_\beta(r_2)
&
= \beta \int_{r_1} ^{r_2 } \frac{1}{t^{\d-1}} \int_0^t s^{\d-1} (\lambda- \cd e^{u_\beta (s)}) \, ds\, dt 
\\ & 
\ge \beta
\frac{\lambda }{2}  \int_{r_1 }  ^{r_2 } \frac{1}{t^{\d-1}} \int_{r_1}^t s^{\d-1}  \, ds\, dt  
\\ & 
\ge \beta \frac{\lambda }{2 \d } \int_{r_1}^{r_2}\frac{ t^{\d}- r_1^{\d}}{t^{\d-1}} \, dt 
\\ & 
\ge  \beta \frac{\lambda }{2 \d } \int_{r_1}^{r_2}( t- r_1) \, dt  
\\ & 
\ge \beta \frac{\lambda }{4 \d }  (r_2-r_1)^2.
\end{align*}
The result follows.
\end{proof}

We may now use the radial solution as a barrier for the solution in the general case.
\begin{prop}\label{pro34} Let 
\be\label{defMb}
M_\beta := \beta \max_{\R^\d} (h^{\mub}-c_\beta- h^{\muv}+c_\infty),\ee
and $1\ge \eta\ge e^{-M_\beta}$. 
There exists $C>0$ depending only on $V $ and $\d$ such that  for $x\in \Sigma$ satisfying  $\dist (x, \partial \Sigma) \ge C
\sqrt{\frac{M_\beta}{\beta}}$,
  we have
\be \label{lbmu}
\mub(x) \ge \eta.
\ee
\end{prop}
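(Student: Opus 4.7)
The plan is to use the one-dimensional radial profile $u_\beta$ from Lemma \ref{lem33} as a lower barrier for $\log\mub$ via a comparison (maximum-principle) argument, with the barrier centered at a point $y$ obtained by displacing $x$ slightly toward the interior of $\Sigma$.

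First I would record the preliminary lower bound $\mub\ge e^{-M_\beta}$ on $\Sigma$. Inside $\Sigma$ we have $h^{\muv}+V=c_\infty$ (since $\zeta\equiv 0$ there), so \eqref{eqhmub} rewrites as $\log\mub=-\beta\bigl[(h^{\mub}-c_\beta)-(h^{\muv}-c_\infty)\bigr]$, and the definition \eqref{defMb} of $M_\beta$ bounds the bracket by $M_\beta/\beta$. I would then apply Lemma \ref{lem33} with $\lambda:=\alpha$ from \eqref{assumpV4} and $K_\beta:=M_\beta$, which is admissible because $M_\beta\le C\log\beta$ by Proposition \ref{lemcomp}. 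This produces radii $r_1\le r_2\le 2R_\d\alpha^{-1/\d}$ with $u_\beta(r_2)=-M_\beta$, $u_\beta(r_1)=\log\eta$, and the boundary-layer estimate $r_2-r_1\le C\sqrt{M_\beta/\beta}$.

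For $x\in\Sigma$ with $\dist(x,\partial\Sigma)\ge C\sqrt{M_\beta/\beta}\ge r_2-r_1$, I choose $y$ by moving from $x$ a distance $r_2-\dist(x,\partial\Sigma)\le r_1$ along the inward normal from its closest boundary point, so that $\dist(y,\partial\Sigma)=r_2$, $|x-y|\le r_1$, and $B(y,r_2)\subset\Sigma$ (using the $C^{1,1}$ regularity of $\partial\Sigma$). On $B(y,r_2)$ I set $w(z):=\log\mub(z)-u_\beta(|z-y|)$. From \eqref{eqmub} minus \eqref{ode},
\[
\Delta w=\beta\cd\bigl(\mub-e^{u_\beta(|\cdot-y|)}\bigr)-\beta(\Delta V-\alpha),
\]
and $\Delta V\ge\alpha$ on $\Sigma$. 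On $\partial B(y,r_2)$ the preliminary bound gives $\log\mub\ge -M_\beta=u_\beta(r_2)$, so $w\ge 0$. If $w$ were negative anywhere in $B(y,r_2)$, it would attain an interior minimum $x^\ast$; there $\mub(x^\ast)<e^{u_\beta(|x^\ast-y|)}$ makes the first term of $\Delta w(x^\ast)$ strictly negative while the second term is nonpositive, so $\Delta w(x^\ast)<0$, contradicting $\Delta w(x^\ast)\ge 0$. Hence $w\ge 0$ in $B(y,r_2)$, and evaluating at $z=x$ yields
\[
\log\mub(x)\ge u_\beta(|x-y|)\ge u_\beta(r_1)=\log\eta,
\]
which is \eqref{lbmu}.

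The main obstacle I expect is the geometric placement of $y$: the radius $r_2$ is of order one (not shrinking with $\beta$), so a ball of fixed radius has to be inscribed inside $\Sigma$ near $x$. This is precisely where the compactness of $\Sigma$ and the $C^{1,1}$ regularity of $\partial\Sigma$ are essential; if $r_2$ happens to exceed the uniform interior-ball radius, one must slightly shrink $\lambda$ below $\alpha$ to reduce $r_2$, at the cost of a modestly worse constant in $r_2-r_1\le C\sqrt{M_\beta/\beta}$.
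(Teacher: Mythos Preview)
Your overall strategy—using the radial profile of Lemma~\ref{lem33} as a barrier and comparing via the maximum principle—is exactly the paper's approach, and your pointwise comparison argument for $w=\log\mub-u_\beta(|\cdot-y|)$ is a perfectly valid substitute for the paper's integral version (testing against $(\log\nu-\log\mub)_+$). The difference is cosmetic.

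There is, however, a genuine gap in the geometric step, and your proposed fix is in the wrong direction. Taking $\lambda=\alpha$ gives $r_2\le 2R_\d\alpha^{-1/\d}$, a fixed constant that may well exceed the uniform interior-ball radius $\ep$ of $\Sigma$; in that case your construction of $y$ with $B(y,r_2)\subset\Sigma$ fails (think of a thin annulus). You suggest ``shrinking $\lambda$ below $\alpha$'' to reduce $r_2$, but since $r_2\le 2R_\d\lambda^{-1/\d}$, decreasing $\lambda$ \emph{enlarges} the barrier ball; moreover the comparison already forces $\lambda\le\alpha$ (you need $\Delta V\ge\lambda$), so there is no room to move in the other direction either. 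The paper resolves this by a scaling trick: choose $\lambda\ge\alpha$ large enough that $2R_\d\lambda^{-1/\d}\le\ep$, take the radial solution $\mu_{\alpha\beta/\lambda}$ at the \emph{modified} inverse temperature $\alpha\beta/\lambda$ with potential strength $\lambda$, and set $\nu:=(\alpha/\lambda)\,\mu_{\alpha\beta/\lambda}$. A one-line computation shows $\nu$ still satisfies the correct barrier equation $\Delta\log\nu=\beta(\cd\nu-\alpha)$, while its concentration radius is now $\le 2R_\d\lambda^{-1/\d}\le\ep$, so the tangent ball fits inside $\Sigma$. A related side effect of your choice $\lambda=\alpha$ is that Lemma~\ref{lem33} then only applies for $\eta\le\alpha/(2\cd)$, which may be strictly below $1$; the paper's choice $\lambda\ge 2\cd$ makes the constraint $\eta\le\lambda/(2\cd)$ vacuous for $\eta\le 1$.
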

\begin{proof} We know from \eqref{compp} that $M_\beta \le C \log \beta$. 
Taking the exponential  of \eqref{eqhmub} and using the definition \eqref{defzeta} and the definition of $M_\beta$, we find 
\be\label{bmfin01}
\mut=\exp(  \beta ( c_\beta - h^{\mut}-V))=\exp\(\beta (c_\beta-c_\infty+h^{\muv}-h^{\mub}- \zeta)\) \ge e^{-M_\beta}\ee in $\Sigma$, since $\zeta=0$ in $\Sigma$.
 Since $\pa \Sigma \in C^{1,1}$, it satisfies an interior ball condition,  with a ball of radius which can be chosen independently of the point, say of radius $\ep$. We then choose $\lambda\ge 2 \cd$ large enough that $\lambda \ge \alpha$ and $2 R_\d\lambda^{-1/\d}\le\ep$. Given this $\lambda$, we consider $\nu$ to be  $\alpha/\lambda$ times the  radial $\mu_{\frac{\alpha\beta}{\lambda}} $ of Lemma \ref{lem33}, which satisfies 
\be \label{eq112}\Delta \log \nu = \beta (\cd\nu - \alpha).\ee
We also let $K_{\alpha\beta/\lambda}=M_\beta$ in Lemma \ref{lem33} applied at the inverse temperature $\beta \alpha/\lambda$, and  we let $r$ be the $r_2$ given there. 
  Since $r_2 \le 2 R_\d \lambda^{-\frac1\d}\le \ep$, a ball  $B_r$ tangent to $\pa \Sigma$  at any point can be included in $\Sigma$.
 In view of  \eqref{bmfin01}, the monotonicity of $\nu$ and the definition of $r$ and $K_{\alpha \beta/\lambda}$,  we check that $\nu \le \mub$ on $\partial B_{r}$.
 We now substract \eqref{eqmub} and \eqref{eq112} and test the resulting relation against $(\log \nu - \log \mub)_+$ which vanishes on $\partial B_{r}$.
We obtain 
$$\int_{B_{r}} (\Delta \log \nu -\Delta \log \mub) (\log \nu - \log \mub)_+= \beta \int_{B_{r}} (\cd \nu-\cd \mub + \Delta V-\alpha) (\log \nu - \log \mub)_+.$$
Using that $\Delta V\ge \alpha$ in $B_{r}$ by \eqref{assumpV4} and an integration by parts, we are led to 
$$-\int_{B_{r}\cap \{\nu \ge \mub\} }| \nab (\log \nu - \log \mub)|^2 \ge \beta \cd \int_{B_{r}} (\nu-\mub) (\log \nu - \log \mub)_+\ge 0.$$
It follows that 
$\nu \le \mub$ a.e. in $B_{r}$, thus $\nu$ is a barrier for $\mub$. In view of the result of Lemma~\ref{lem33}, we deduce that 
$\mub\ge \eta$ as soon as $x\in B_{r}$ and $\dist (x, \partial B_{r})\ge  C
\sqrt{\frac{M_\beta+\log \eta}{\beta}}  $ for some $C$ depending only on $V$ and $\d$.
The result follows.
\end{proof}

\section{Optimal estimates and lower bound on \texorpdfstring{$\mub$}{mu beta}} \label{secopt}

We may now conclude
\begin{prop}
\label{lemcomp2}
There exists $C>0$ (depending only on $V$ and $\d$) such that if $\beta$ is large enough, we have 
\be \label{compp2}
 h^{\mub}-c_\beta  - ( h^{\mu_\infty}- c_\infty)\le  \frac{C }{\beta} ,\ee
\be \label{expdecay2} 
 \exp\( - \frac{\beta}{ C }\dist (x, \Sigma)^2- C\) \le \mub(x) \ee
for $x$ in a neighborhood of $\Sigma$, and
 \be \label{lbmb}
 \mub(x) \ge\frac1C>0 \quad \text{for} \ x\in \Sigma.\ee
\end{prop}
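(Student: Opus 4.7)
The plan is to first establish the upper bound \eqref{compp2}, from which \eqref{lbmb} and \eqref{expdecay2} follow almost immediately via the identity \eqref{bmfin0}.

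\smallskip

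\emph{Preliminary step.} First, combining $M_\beta \le C\log\beta$ from Proposition~\ref{lemcomp} with Proposition~\ref{pro34} applied for $\eta = \eta_0$ a fixed positive constant (we will choose $\eta_0 := \alpha/(2\cd)$, which lies in $[e^{-M_\beta},1]$ once $\beta$ is large enough), we obtain the $\beta$-independent bulk lower bound
\begin{equation*}
\mub(x) \ge \eta_0 \quad \text{for every } x \in \Sigma_\beta := \{x \in \Sigma : \dist(x,\partial\Sigma) \ge C\sqrt{\log\beta/\beta}\}.
\end{equation*}
This is a substantial improvement over the implicit bound $\mub \ge \beta^{-C}$ used in the proof of Proposition~\ref{lemcomp}.

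\smallskip

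\emph{Proof of \eqref{compp2}.} The plan is to re-run the comparison argument of Proposition~\ref{lemcomp} with $E := \{\mub < \eta_0\}$ in place of $\{\mub < \beta^{-2}\}$. By the preliminary step, $E$ is contained in $\Sigma^c$ together with a thin shell of width $C\sqrt{\log\beta/\beta}$ around $\partial\Sigma$. Since $\mub \ge \eta_0$ on $\R^\d \setminus E$, we have $-\frac{1}{\beta}\log\mub \le \frac{\log(1/\eta_0)}{\beta}$ there, so the shift required in the comparison function is only $O(1/\beta)$ rather than the $O(\log\beta/\beta)$ used before. Accordingly we set
\begin{equation*}
v := h^{\mub} - c_\beta + c_\infty - \frac{C}{\beta} - w, \qquad w := \g * \Big(\mub\,\indic_E - \frac{\mub(E)}{|\R^\d \setminus E|}\indic_{\R^\d \setminus E}\Big).
\end{equation*}
The crucial task will be to refine the estimate on $w$: whereas the naive bound $|w| \le C\mub(E)$ combined with $\mub(E) \le C/\sqrt\beta$ from \eqref{massmuc} yields only $|w| \le C/\sqrt\beta$, which is insufficient, the plan is to exploit the concentration of $\mub\,\indic_E$ in a thin shell of width $\sim \sqrt{\log\beta/\beta}$ around $\partial\Sigma$. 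Viewing $\mub\,\indic_E$ as an approximate single-layer distribution on $\partial\Sigma$, a careful analysis of the $\g$-potential of the centered signed measure should give the sharp bound $|w| \le C/\beta$. This is the main technical obstacle of the proof. With this in hand, one verifies $\min(v + V - c_\infty, -\Delta v) \le 0$ in $\R^\d$, and the comparison principle (Lemma~\ref{l.comp} in $\d = 2$, or its $\d \ge 3$ variant as in the proof of Proposition~\ref{lemcomp}) gives $v \le h^{\muv}$, which is precisely \eqref{compp2}.

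\smallskip

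\emph{Proofs of \eqref{expdecay2} and \eqref{lbmb}.} The identity \eqref{bmfin0} reads $\mub = \exp(-\beta\varphi - \beta\zeta)$, where $\varphi := h^{\mub} - c_\beta - (h^{\muv} - c_\infty)$. On $\Sigma$, $\zeta = 0$, so \eqref{compp2} gives $\mub = e^{-\beta\varphi} \ge e^{-C}$, proving \eqref{lbmb}. For $x$ in a neighborhood of $\Sigma$, combining \eqref{compp2} with the Caffarelli-type bound $\zeta(x) \le C\dist(x,\Sigma)^2$ from \eqref{caf} yields $\mub(x) \ge \exp(-C\beta\dist(x,\Sigma)^2 - C)$, establishing \eqref{expdecay2}.
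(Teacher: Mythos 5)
Your reduction of \eqref{lbmb} and \eqref{expdecay2} to \eqref{compp2} via \eqref{bmfin0} is exactly right, and your preliminary step (Proposition~\ref{pro34} with $M_\beta\le C\log\beta$ giving $\mub\ge\eta_0$ off a shell of width $C\sqrt{\log\beta/\beta}$) matches the paper's starting point. But the heart of the proof is precisely the step you defer as ``the main technical obstacle,'' and as you have set it up it does not go through: for your choice
$w=\g*\bigl(\mub\,\indc_E-\tfrac{\mub(E)}{|\R^\d\setminus E|}\indc_{\R^\d\setminus E}\bigr)$ with $E=\{\mub<\eta_0\}$, the bound $|w|\le C/\beta$ is not just unproven, it is false in general. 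The positive part $\mub\,\indc_E$ is essentially a layer of total mass $m\sim\beta^{-1/2}$ (up to $\sqrt{\log\beta}$ factors, cf.\ \eqref{massmuc}) concentrated in a shell of width $\sim\sqrt{\log\beta/\beta}$ around $\pa\Sigma$, while the compensating charge is spread over an $O(1)$ region; the potential of a surface-type layer of mass $m$ evaluated on or near $\pa\Sigma$ is of order $m$, and the diffuse background produces no cancellation at that scale, so generically $|w|\sim\beta^{-1/2}$. Plugging this into the comparison argument only reproduces an estimate of order $\beta^{-1/2}$, not \eqref{compp2}. Centering the measure (making it mass-neutral) only helps the decay at infinity, not the size near $\pa\Sigma$.

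The paper's proof supplies the two ideas you are missing. First, a reflection trick: it subtracts from the shell measure $\hat\mub=\mub\indc_E$ its push-forward $R\#\hat\mub$ under the reflection across $\pa\hat\Sigma$, so that the relevant potential is that of a dipole-like signed measure; the three-term estimate \eqref{3term} then yields $|\g*(\hat\mub-R\#\hat\mub)|\le C\eta\,\tau_\beta^2\beta\cdot\beta^{-1}$, i.e.\ density times width \emph{squared}, which is $C\eta M_\beta/\beta$ rather than the single-layer size $\eta\tau_\beta$. Second, a bootstrap in $M_\beta$ (defined in \eqref{defMb}) with a free small parameter $\eta$: the shell width is taken to be $\tau_\beta=C\sqrt{M_\beta/\beta}$, the resulting bound \eqref{bornew1} is $|w|\le C\tfrac{\eta}{\beta}(M_\beta+|\log\eta|)$, and running the comparison principle (Lemma~\ref{l.comp} in $\d=2$, the harmonic-barrier argument of Lemma~\ref{lem24} in $\d\ge3$) gives the self-improving inequality $M_\beta\le C\eta(M_\beta+|\log\eta|)+|\log\eta|$, which upon choosing $\eta$ a small constant absorbs $M_\beta$ and yields $M_\beta\le C$. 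Even granting a dipole-type estimate, a single pass with your fixed width $\sqrt{\log\beta/\beta}$ would only give $C\log\beta/\beta$, so some iteration or self-consistent argument of this kind is unavoidable. Without the reflection and the bootstrap, the proposal stops exactly where the real work begins.
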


\begin{proof}[Proof of Proposition \ref{lemcomp2}] We iterate and improve on the proof of Proposition \ref{lemcomp}.
 Let $M_\beta$ be as in \eqref{defMb}. We know from \eqref{compp} that $M_\beta \le C \log \beta$.  If $M_\beta$ is bounded above independently of $\beta$, then there is nothing to prove. If $M_\beta \to +\infty$ as $\beta \to \infty$, let $\eta$ be a constant in $[ e^{-M_\beta},\min( \frac{\lambda}{2\cd}, 1)]$, to be determined later.  Let then $$\hat \Sigma= \left\{x \in \Sigma, \dist (x, \pa \Sigma ) \ge C \sqrt{\frac{M_\beta}{\beta}}:=\tau_\beta \right\}$$ with $C$ as in Proposition \ref{pro34}. In view of that proposition we have that $\mub\ge \eta$ in $\hat \Sigma$. 

Since $\pa \Sigma \in C^{1,1}$ so is $\pa \hat \Sigma$ for $\beta$ large enough, and  we may consider $R(x)$ to be the reflexion with respect to $\pa \hat \Sigma $, defined in a tubular neighborhood. We next let $\hat \mub= \mub \indic_{E} $ where 
$$E:= \( \{  0 \le \dist (x, \Sigma) < \beta^{-\frac14 }\} 
\cup (\Sigma \backslash \hat \Sigma) \)  \cap  \{ \mub(x) \le \eta\}.$$ Arguing as in \eqref{345} we have that \be \label{567} |
 \mub ( ( \Sigma\backslash E)^c) - \hat \mub (E)  
|< \frac{C}{\beta}.\ee
We note that $E$ is included in a $\tau_\beta+ \beta^{-1/4}$ neighborhood of $\pa \Sigma$.

Let  $w$ be 
\be w:=\g* \(\hat  \mub - R\#\hat \mub + ( \mub  \indic_{(\Sigma\backslash E)^c} - \hat \mub) 
 -\frac{ \mub ( ( \Sigma\backslash E)^c) - \hat \mub (E)    }{|\hat \Sigma|}\indic_{\hat \Sigma}\), \ee
 where $\# $ denotes the push-forward of measures.
 We claim that
  \be\label{bornew1}
\forall x \in \R^\d, \quad |w(x)| \le C\frac{\eta}{\beta}( M_\beta+ |\log \eta|),\ee with $C$ depending only of $V$ and $\d$. 
 We note that $w$ decays  like $|x|^{1-\d}$ in  all dimensions $\d \ge 2$ because its Laplacian integrates to $0$, and in view of \eqref{567} we have 
  $$\left|\g*\(   \mub  \indic_{(\Sigma\backslash E)^c} - \hat \mub 
 -\frac{ \mub ( ( \Sigma\backslash E)^c) - \hat \mub (E)  }{|\hat \Sigma|}\indic_{\hat \Sigma}\) \right|\le C \beta^{-1}
 .$$ 
 Hence there remains to show that  
 \be \label{ghj} 
 |\g*  \(\hat  \mub - R\#\hat \mub \) |\le C\eta \frac{M_\beta}{\beta}.\ee 
By definition of the push-forward, we have 
\begin{align}
\label{3term}
|\g*  \(\hat  \mub - R\#\hat \mub \)(x) | 
&
= \left|\int \( \g(x-y)-\g(x-R(y)) \) d\hat \mub (y)\right|
\\ \notag &
\leq C\eta \int_{\min (|y-x|, |R(y)-x|) \le \tau_\beta    } |\g(x-y)-\g(x-R(y))| dy 
\\ \notag & \qquad 
+ C \int_{|y-x|\ge\tau_\beta , |R(y)-x| \ge\tau_\beta }  
\frac{|\langle y-R(y), y-x\rangle|}{|y-x|^{\d}} d\hat \mub(y) 
\\ \notag & \qquad 
+ C \int_{|y-x|\ge \tau_\beta   , |R(y)-x| \ge \tau_\beta} 
\frac{|y-R(y)|^2}{|y-x|^{\d}} d\hat \mub(y).
\end{align}
 The first time in the right-hand side is bounded by $C \eta\tau_\beta^2$ in all dimensions, while the second is bounded by 
 \begin{multline}\label{sect} C \int_{|y-x|\ge\tau_\beta  ,0< \dist(y, \Sigma) < \beta^{-1/4}}  
 \frac{\dist(y,\hat \Sigma)  |\langle y-x, n(y)\rangle|} {|y-x|^{\d}}\min \( \eta,  \exp(-\beta \alpha\,  \dist^2(y, \Sigma))\) \, dy\\
 +C \eta  \int_{|y-x|\ge \tau_\beta , y \in \Sigma \backslash \hat \Sigma}  
 \frac{\dist(y,\hat \Sigma)  |\langle y-x, n(y)\rangle|} {|y-x|^{\d}}  \, dy
\end{multline} 
where~$n(y)$ is the normal vector to~$\pa \hat\Sigma$ at~$y$. We claim that these terms are bounded  by $C \eta \tau_\beta^2$. 
To see this, let us use local coordinates adapted to $\hat \Sigma$ of the form $(s, t) \in\pa \hat \Sigma \times \R$ such that each vector $y$ can be decomposed into $y=s+t n(y)$, with $s \in \pa \hat \Sigma$ and $t=\dist (y,\hat \Sigma)$.  We choose the origin so that $x$ has coordinates $(0, t_0)$, and assume $|t_0|<\ep$ (for otherwise the result is clearly true).  The Jacobian of the change of variables is bounded, and $\langle y-x, n(y)\rangle = O(|s|^{2} ) + t-t_0$ as $|s| \to 0$,  using that $\pa \hat \Sigma \in C^{1,1}$,  so we may locally  on $\pa\hat  \Sigma$  bound this integral by 
\begin{multline*}
 C \int_{|t-t_0| \ge \tau_\beta  ,\tau_\beta  <  t  <  \tau_\beta + \beta^{-1/4}, s \in \pa\hat \Sigma } 
\frac{t\min \(\eta, \exp(-\beta  \alpha (t-\tau_\beta)^2)\) ( (t-t_0)+O(s^{2} ) )  }{ \( |t-t_0|^2 + |s|^{2} + O( |s|^{2} |t-t_0|) \)^{\frac\d2} }ds \, dt \\
\le C\eta  \int_{ \tau_\beta <t<\tau_\beta+ \beta^{-1/4} , s' \in \pa\frac{\hat \Sigma}{|t-t_0|} } \frac{t  \min\(\eta,  \exp(-\beta \alpha (t-\tau_\beta)^2)\)   (O(1+ |s'|^{2}  |t-t_0|) )   }{ \hal \( 1 + |s'|^2  \)^{\frac\d2} }ds' \, dt\\ \le C\eta \tau_\beta \sqrt{\frac{|\log \eta|}{\beta}} \le C \( \eta \tau_\beta^2 + \frac{\eta|\log \eta|}{\beta}\)
\end{multline*} where used the change of variables $s= |t-t_0|s'$, that $\pa \hat \Sigma \in C^{1,1}$ and that $\ep$ can be  chosen small enough.

The  second term in \eqref{sect}   and the third term in \eqref{3term} are bounded by $C \eta \tau_\beta^2$  by  similar computations, which 
concludes the proof of  \eqref{ghj} hence of \eqref{bornew1}. 
  Let us then set
$$v:= h^{\mub}-c_\beta+ c_\infty - w- C \frac{\eta}{\beta} ( M_\beta+ |\log \eta|)  + \frac{\log \eta}{\beta}$$
for the $C$ of \eqref{bornew1}.
Observe that  
\begin{equation*}
-\frac{1}{\cd}\Delta v = \mu_\beta \indic_{\Sigma\backslash E}  + R\# \hat \mub + \frac{ \mub(\Sigma^c) -\hat \mub(\Sigma^c)}{|\hat\Sigma|}\indic_{\hat \Sigma}     \quad \mbox{in} \ \Rd,
\end{equation*}
hence $\Delta v$ is supported in $\Sigma\backslash E$.
By  \eqref{eqhmub},  \eqref{bornew1} and $\mub \ge \eta$ in $ \Sigma\backslash E$, we have in $\Sigma \backslash E$,
\begin{align}
\label{align1}
v+V-c_\infty& = h^{\mub}+V - c_\beta -w- C \frac{\eta}{\beta} ( M_\beta+ |\log \eta|)+ \frac{\log \eta}{\beta}
\\ \nonumber
& = -\frac1\beta \log \mub+ \frac{\log \eta}{\beta} - C \eta \frac{M_\beta+|\log \eta|}{\beta}
  -w  \le 0.
\end{align} It follows that 
\be 
\min(v+V-c_\infty, -\Delta v) \le 0  
\quad \mbox{in} \ \Rd.\ee
In dimension $\d=2$ the comparison principle of Lemma \ref{l.comp} allows to conclude that $ v \le h^{\muv}$ which yields the desired upper bound for $h^{\mub}$. 
Let us now turn to dimension $\d \ge 3$.
Setting 
$$\varphi:= h^{\muv} -v,$$ by \eqref{align1} and \eqref{op}
we have  
\be \label{frt1}
\left\{\begin{array}{ll}
\varphi\ge 0 & \text{ in} \ \Sigma \backslash E\\
-\Delta\varphi\ge 0 & \text{ in} \  (\Sigma \backslash E)^c.\end{array}\right.\ee
 We also have 
$ \varphi \to c_\beta -c_\infty
- C \eta \frac{M_\beta+|\log \eta|}{\beta}+ \frac{\log \eta}{\beta}
 $ at $\infty.$
Arguing as in the proof of Lemma~\ref{lem24}, let $\psi$ be a harmonic function equal vanishing on $\pa (\Sigma \backslash E)$ and $c_\beta -c_\infty
- C \eta \frac{M_\beta+|\log \eta|}{\beta}+ \frac{\log \eta}{\beta} $ at infinity, we have $ \varphi \ge \psi$ in $\Sigma\backslash E$ and if $ c_\beta -c_\infty - C \eta \frac{M_\beta+|\log\eta|}{\beta} + \frac {\log \eta}{\beta}<0$, 
 $\psi $ tends to its limit from above at speed $|x|^{2-\d}$.
 On the other hand  $\int_{\Rd} \Delta \varphi =0$. As in the proof of Lemma~\ref{lem24}, we get a contradiction 
  and conclude that $c_\beta -c_\infty
  - C \eta \frac{M_\beta+|\log \eta|}{\beta}+ \frac{\log \eta}{\beta} \ge 0$. We then conclude from \eqref{frt1} and the maximum principle that $\varphi \ge 0$ everywhere, which yields 
  \begin{equation*} 
  h^{\muv} -c_\infty - h^{\mub}+c_\beta \ge     
    \frac{\log \eta}{\beta} - C \eta \frac{M_\beta+|\log \eta|}{\beta}
 \end{equation*} hence by definition of $M_\beta$
 $$\frac{M_\beta}{\beta} \le  C \eta \frac{M_\beta+|\log \eta|}{\beta}- \frac{\log \eta}{\beta}.$$
 Choosing $\eta$  a small enough constant, we obtain that $M_\beta \le C$, which  concludes the proof of \eqref{compp2}. The result \eqref{expdecay2} 
   follows from combining \eqref{bmfin0}  and  \eqref{compp2}, and  \eqref{lbmb}  follows from combining \eqref{bmfin0},   \eqref{compp2} and the  fact that $\zeta=0$ in $\Sigma$.
 \end{proof}

\begin{coro}
We have 
\be \label{bornegrad}\|\nab (h^{\mub}-h^{\mu_\infty})\|_{L^\infty(\R^\d)}\le  C \beta^{-\hal}
.\ee
\end{coro}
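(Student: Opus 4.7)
Set $u := h^{\mub} - c_\beta - h^{\muv} + c_\infty$. The strategy is to combine the $L^\infty$ bound from Proposition~\ref{lemcomp2} with an $L^\infty$ bound on $\Delta u$ via a standard Landau--Kolmogorov-type interpolation based on the Poisson representation.

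First, I would record the two pointwise ingredients. From \eqref{compp2} together with its companion lower bound in Proposition~\ref{lemcomp2}, we have $\|u\|_{L^\infty(\R^\d)} \le C\beta^{-1}$. On the other hand, since $-\Delta u = \cd(\mub - \muv)$ with $\muv = \cd^{-1}\Delta V\,\indic_\Sigma$ bounded (as $V \in C^2$) and $\mub$ uniformly bounded by Lemma~\ref{lub}, we get $\|\Delta u\|_{L^\infty(\R^\d)} \le C$.

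Next, at any $x_0 \in \R^\d$ and radius $r > 0$ (to be optimized), I would split $u = v + w$ on $B_r(x_0)$, where $v$ is harmonic in $B_r(x_0)$ with $v = u$ on $\partial B_r(x_0)$, and $w$ solves $-\Delta w = -\Delta u$ in $B_r(x_0)$ with $w = 0$ on $\partial B_r(x_0)$. Standard Newtonian potential estimates give
\begin{equation*}
\|w\|_{L^\infty(B_r(x_0))} \le Cr^2\|\Delta u\|_{L^\infty}, \qquad \|\nabla w\|_{L^\infty(B_r(x_0))} \le Cr\|\Delta u\|_{L^\infty},
\end{equation*}
while interior gradient estimates for the harmonic function $v$ yield
\begin{equation*}
\|\nabla v\|_{L^\infty(B_{r/2}(x_0))} \le \frac{C}{r}\|v\|_{L^\infty(B_r(x_0))} \le \frac{C}{r}\bigl(\|u\|_{L^\infty} + \|w\|_{L^\infty}\bigr).
\end{equation*}
Combining,
\begin{equation*}
|\nabla u(x_0)| \le \frac{C}{r}\|u\|_{L^\infty(\R^\d)} + Cr\|\Delta u\|_{L^\infty(\R^\d)}.
\end{equation*}

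Finally, optimizing by choosing $r = \sqrt{\|u\|_{L^\infty}/\|\Delta u\|_{L^\infty}} \sim \beta^{-1/2}$ gives
\begin{equation*}
|\nabla u(x_0)| \le C\|u\|_{L^\infty}^{1/2}\|\Delta u\|_{L^\infty}^{1/2} \le C\beta^{-1/2},
\end{equation*}
uniformly in $x_0 \in \R^\d$. Since $\nabla(h^{\mub}-h^{\muv}) = \nabla u$, this yields \eqref{bornegrad}. There is no serious obstacle here; the only point worth flagging is that although $\Delta u$ is only bounded (not H\"older continuous a priori), the Poisson-potential decomposition above bypasses any need for $D^2u \in L^\infty$, and the scale $\beta^{-1/2}$ emerges naturally as the geometric mean of the $L^\infty$ and Laplacian scales, matching the boundary-layer width predicted in the introduction.
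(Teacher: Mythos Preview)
Your proof is correct and follows essentially the same approach as the paper: interpolate between the $L^\infty$ bound $\|u\|_{L^\infty}\le C\beta^{-1}$ (from Proposition~\ref{lemcomp2} together with the lower bound of~\eqref{compp}) and the bound $\|\Delta u\|_{L^\infty}=\cd\|\mub-\muv\|_{L^\infty}\le C$. The paper simply invokes this interpolation inequality by citing~\cite{bbh2}, whereas you spell out the standard harmonic/Poisson decomposition that proves it; the content is the same.
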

\begin{proof}
This follows from \eqref{compp}, the fact that $\|\mub-\muv\|_{L^\infty} \le C$ and interpolation (see for instance the appendix in \cite{bbh2}).
\end{proof}

\section{Regularity theory and iterative approximation}\label{sec4}
Once $\mub$ is bounded below, the PDE \eqref{divfor} becomes uniformly  elliptic and we may apply regularity theory tools to compare $\mub$ to the expected solution.
In the case that $\Delta V$ is constant, then we can show that $\mub$ is very close to the constant $\muv$  inside $\Sigma$, however in the case where $\Delta V$ is not constant, there are corrections to arbitrary order that need to be added to $\muv$.

Assuming that $V \in C^{2m,\gamma}$ for some~$m\in\mathbb{N}$ and exponent~$\gamma>0$, we recursively define $f_k$ by \eqref{41}.
We note that, for $\beta$ sufficiently large depending on  the norms of $V$ and on $k$, and by \eqref{assumpV4},
\begin{equation}
\label{e.fkbounds}
\|f_k\|_{C^{2(m-k-1),\gamma}(\Sigma)}
\leq C 
\quad \mbox{and} \quad 
f_k \geq \frac{\alpha}{4\cd} \quad \mbox{in} \ \Sigma. 
\end{equation}
We also define 
\be\label{epk}
\ep_k := \Delta \log f_k - \beta (\cd f_k- \Delta V)= \beta \cd (f_{k+1}-f_k)
\ee
and check that
$$\ep_{k+1}= \Delta \log \( 1+ \frac{\ep_k}{\beta \cd f_k}\)$$
and  thus
\begin{equation}
\label{epkbounds}
\|\ep_k\|_{C^{2(m-k-2),\gamma}(\Sigma)} \leq
C\beta^{-k}.
\end{equation}
Thus since $\ep_k$ gets small as $\beta $ gets large,  $f_k$ is a good approximate solution to \eqref{eqmub} for $k\ge 1$.  In view of \eqref{epk} and \eqref{epkbounds}, if $V\in C^\infty$ then $f_k$ converges as $ k \to \infty$ in all $C^m$ spaces  to $f_\infty$, an exact solution of \eqref{eqmub}.
\begin{prop}
Assume $m\in\mathbb{N}$, $m\ge 2$, and $\gamma\in (0,1]$ are such that $V \in C^{2m,\gamma}$. Then  for every $n$ even integer with $n \le 2(  m-2)$ and every $0\le \gamma'\le \gamma$, there exists $C>0$ depending only on $V,\d, n$ 
such that if $\beta$ is large enough depending on $m$,
\begin{equation}
\label{e.higherreg}
 \left\| \mub(x)-  f_{m-2-\frac{n}{2}}(x)\right\|_{C^{n, \gamma'}( \Sigma)} 
\leq 
C\beta^{\frac{n+\gamma'}{2}}  \exp\left( - C \log^2 \bigl(\beta \dist^2\big(x, \partial \Sigma\big) \big)  \right) 
+ C \beta^{n-m+1+\frac{\gamma'}{2}}. \end{equation}
\end{prop}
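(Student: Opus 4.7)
The plan is to analyze the relative error $u_k := \mub/f_k - 1$ and exploit the divergence-form PDE satisfied by it, which is the specific instance of \eqref{divfor} obtained by comparing \eqref{eqmub} for $\mub$ with the definition \eqref{epk} for $f_k$. Subtracting $\Delta \log f_k = \beta(\cd f_k - \Delta V) + \ep_k$ from $\Delta \log \mub = \beta(\cd \mub - \Delta V)$ and using $\mub - f_k = f_k u_k$, I arrive at
$$\div \frac{\nabla u_k}{1+u_k} = \beta \cd f_k u_k - \ep_k \quad \text{in } \Sigma.$$
By the two-sided bounds on $\mub$ from Proposition~\ref{lemcomp2} together with \eqref{e.fkbounds}, the coefficient $(1+u_k)^{-1} = f_k/\mub$ is bounded above and below on $\Sigma$ by constants uniform in $\beta$, so the equation is uniformly elliptic in divergence form. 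The right-hand side contains a large zero-order coefficient of order $\beta$, setting a natural boundary-layer length scale $1/\sqrt{\beta}$, and a forcing $\ep_k$ of size $\beta^{-k}$ by \eqref{epkbounds}.

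The first step would be to obtain an $L^\infty$ bound for $u_k$ that decays with $\dist(x,\partial\Sigma)$, of the form
$$|u_k(x)| \le C\exp\bigl(-C\log^2(\beta\,\dist^2(x,\partial\Sigma))\bigr) + C\beta^{-k-1}.$$
The second term is the maximum-principle balance between the forcing $\ep_k \sim \beta^{-k}$ and the stiff zero-order term $\beta u_k$. The first term captures the penetration of boundary data from $\partial\Sigma$, where $f_k$ (defined by interior iteration) does not match the true $\mub$, which is decaying exponentially outside $\Sigma$ by \eqref{expdecay2}. I would establish this by a barrier argument, using the radial explicit supersolutions of Section~\ref{secrad} localized via the interior ball condition on $\partial\Sigma$, and iterating maximum principle bounds on a sequence of shrinking annular neighborhoods of $\partial\Sigma$ to accumulate the $\log^2$ rate.

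The second step is to bootstrap from $L^\infty$ to $C^{n,\gamma'}$ using De Giorgi–Nash–Moser and Schauder regularity at the natural scale. At a point $x_0 \in \Sigma$ with $d := \dist(x_0,\partial\Sigma)\gtrsim 1/\sqrt\beta$, I would rescale by $\tilde u(y) := u_k(x_0 + y/\sqrt\beta)$ on $B_1$, so that the rescaled PDE is uniformly elliptic with $O(1)$ zero-order coefficient and $C^{n,\gamma}$ coefficients coming from $f_k \in C^{2(m-k-1),\gamma}$; here the choice $k = m-2-n/2$ ensures enough regularity of $f_k$ to apply interior Schauder estimates up to order $n+\gamma'$. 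The resulting estimate $\|\tilde u\|_{C^{n,\gamma'}(B_{1/2})} \le C\bigl(\|\tilde u\|_{L^\infty(B_1)} + \beta^{-k-1}\bigr)$ unrescales to
$$[u_k]_{C^{n,\gamma'}(x_0)} \le C\beta^{(n+\gamma')/2}\bigl(\|u_k\|_{L^\infty(B_{1/\sqrt\beta}(x_0))} + \beta^{-k-1}\bigr).$$

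With the choice $k = m-2-n/2$, the $\ep_k$-driven term produces $C\beta^{(n+\gamma')/2}\cdot\beta^{-k-1} = C\beta^{n-m+1+\gamma'/2}$, which is exactly the second term in \eqref{e.higherreg}, while combining the local $L^\infty$ bound from the first step with the Schauder amplification yields the first term. Converting from $u_k$ to $\mub - f_k = f_k u_k$ only costs a uniformly bounded multiplicative factor on $\Sigma$. I expect the main obstacle to be the first step, specifically establishing the precise $\log^2$ rate in the boundary-decay estimate: one must handle the transition layer of width $\sim 1/\sqrt\beta$ around $\partial\Sigma$ where the lower bound on $\mub$ (and hence the uniform ellipticity of the equation for $u_k$) degrades, and carefully set up the iterative barrier scheme so that boundary data is damped superpolynomially without losing the Schauder framework that powers the bootstrap.
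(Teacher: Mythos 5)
Your overall skeleton coincides with the paper's: you introduce $u_k=\mub/f_k-1$, derive exactly the divergence-form equation \eqref{divform}, note uniform ellipticity from the two-sided bounds on $\mub$ (Proposition~\ref{lemcomp2}) and \eqref{e.fkbounds}, rescale at the scale $\beta^{-1/2}$, apply De Giorgi--Nash and then iterated Schauder estimates, and take $k=m-2-n/2$; the bookkeeping $\beta^{(n+\gamma')/2}\cdot\beta^{-(k+1)}=\beta^{n-m+1+\gamma'/2}$ is the same as in the paper. Where you diverge is the one step that actually produces the factor $\exp\left(-C\log^2\left(\beta\,\dist^2(x,\partial\Sigma)\right)\right)$, and there your argument has a genuine gap. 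You propose to get the interior $L^\infty$ decay by a maximum-principle/barrier scheme reusing ``the radial explicit supersolutions of Section~\ref{secrad}''. But the radial functions of Section~\ref{secrad} solve the nonlinear equation \eqref{eqmub} for a quadratic potential; in the paper they are used only as \emph{lower} barriers for $\mub$ itself (to obtain \eqref{lbmb}, hence ellipticity of \eqref{divform}), and they are not sub- or supersolutions of the error equation $-\div\left(\frac{\nabla u_k}{1+u_k}\right)+\beta\cd f_k u_k=\ep_k$. To run your scheme you would have to construct new barriers adapted to this quasilinear operator with its $O(\beta)$ zero-order term and forcing $\ep_k$, justify a comparison principle for it (the flux $p/(1+z)$ depends on $z$, so this is not automatic), and then carry out the annular iteration yielding the $\log^2$ rate --- none of which is done, and which you yourself flag as the main obstacle.

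By contrast, the paper's mechanism for this step is purely an energy iteration and needs no barriers or boundary data at all: testing \eqref{divform} with $\chi^2 u_k$ gives the Caccioppoli-type inequality \eqref{toiter},
\begin{equation*}
\dashint_{B_{r/2}(x_0)} u_k^2 \le \frac{C}{\beta r^2}\dashint_{B_r(x_0)} u_k^2 + C\beta^{-2(k+1)},
\end{equation*}
in which the large zero-order coefficient supplies a gain $C/(\beta r^2)$ per halving of the radius. Starting from the trivial bound $u_k=O(1)$ on the largest ball $B_r(x_0)\subset\Sigma$, $r\le\dist(x_0,\partial\Sigma)$ (a consequence of \eqref{bornelmub}, \eqref{lbmb} and \eqref{e.fkbounds}), and iterating roughly $\log(\beta r^2)$ times yields the $\exp(-c\log^2(\beta r^2))$ factor plus the $\beta^{-2(k+1)}$ forcing contribution; De Giorgi then upgrades the resulting local $L^2$ bound to $L^\infty$ and H\"older at scale $\beta^{-1/2}$, after which Schauder proceeds as you describe. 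Note also that your concern about the ellipticity ``degrading'' in a layer of width $\beta^{-1/2}$ near $\partial\Sigma$ is unfounded: \eqref{lbmb} gives $\mub\ge 1/C$ on all of $\Sigma$, so \eqref{divform} is uniformly elliptic up to $\partial\Sigma$; the loss near the boundary in \eqref{e.higherreg} comes only from the fact that the iteration starts from the $O(1)$ bound on a ball of radius $\dist(x_0,\partial\Sigma)$, not from any failure of ellipticity. As written, your proof is incomplete precisely at the decay estimate; either supply the barrier construction and comparison argument you sketch, or replace that step by the iteration of \eqref{toiter}.
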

\begin{proof}
Define $u_\beta :=\frac{\mub}{f_k}- 1$. 
By~\eqref{eqmub}, we have
$$\Delta \log (f_k (u_\beta+ 1)) =\beta(\cd f_k  u_\beta+ \cd f_k-\Delta V).
$$
In view of~\eqref{epk}, we get 
$$\Delta \log (1+u_\beta ) = \beta \cd f_k u_\beta- \ep_k$$
which can be rewritten as 
\be \label{divform}
- \div \( \frac{\nab u_\beta}{1+u_\beta}\) +  \beta \cd f_k u_\beta = \ep_k.
\ee
This equation is uniformly elliptic in~$\Sigma$ since, by \eqref{bornelmub},\eqref{lbmb} and~\eqref{e.fkbounds}, 
\begin{equation}
\label{ubetabounds}
\frac\alpha C \leq u_\beta +1 \leq \frac C\alpha \quad \text{in} \ \Sigma. 
\end{equation}



\smallskip

We next seek a local $L^2$ estimate for $u_\beta$ in $\Sigma$. Select~$x_0\in \Sigma$, $r\in \big( 0 ,\dist \big(x_0,\partial \Sigma\big) \big)$ and a cutoff function $\chi\in C^\infty_c(B_r)$. Testing~\eqref{divform} with~$\chi^2 u_\beta$, we obtain
$$
\int_{B_r(x_0)} \chi^2 \frac{|\nab u_\beta|^2 }{1+u_\beta}
+ \int_{B_r(x_0)} \frac{2u_\beta \chi  \nab \chi \cdot \nab u_\beta}{1+u_\beta} +  \beta \cd \int_{B_r(x_0)} \chi^2 f_k u_\beta^2 
= \int_{B_r(x_0)} \chi^2 \ep_k u_\beta .
$$
Using Young's inequality and~\eqref{e.fkbounds}, we obtain after rearrangement that
\begin{align*}
\frac 12\int_{B_r(x_0)} \chi^2 \frac{|\nab u_\beta|^2 }{1+u_\beta}
+
\frac{\beta \cd}2  \int_{B_r(x_0)} \chi^2 f_k u_\beta^2
&  
\leq
4 \int_{B_r(x_0)} \frac{u_\beta^2 \left| \nabla \chi \right|^2}{1+u_\beta}
+
\frac1{2\beta \cd} \int_{B_r(x_0)}\chi^2 \ep_k^2
\\ & 
\leq 
C\int_{B_r(x_0)} |\nab \chi|^2 u_\beta^2 
+ \frac{C}{\beta} \int_{B_r(x_0)} \ep_k^2.
\end{align*}
Choosing~$\chi$ such that $\indc_{B_{r/2}(x_0)} \leq \chi \leq \indc_{B_{r}(x_0)}$ and~$|\nab \chi | \leq 4r^{-1}$ and using~\eqref{e.fkbounds}, ~\eqref{epkbounds} and~\eqref{ubetabounds}, we find that, if $k\le m-2$, then
\begin{equation}
\dashint_{B_{r/2}(x_0)} \left| \nabla u_\beta \right|^2 
+
\beta \dashint_{B_{r/2}(x_0)} u_\beta^2
\leq
\frac{C}{r^2} \dashint_{B_{r}(x_0)} u_\beta^2 + C\beta^{-(2k+1)}
\end{equation}
In particular, keeping only the second term on the left side, we obtain 
\begin{equation}
\label{toiter}
\dashint_{B_{r/2}} u_\beta^2 \le \frac{C}{\beta r^2} \dashint_{B_r} u_\beta^2  + C\beta^{-2(k+1)}.
\end{equation}
After an iteration of the previous inequality, we obtain, for $s:= C\beta^{-\frac12}$, 
\begin{align*}
\dashint_{B_{s}(x_0)} u_\beta^2 
\leq 
\exp\left( - c \log^2 \bigl(\beta r^2\big)  \right) 
+ C\beta^{-2(k+1)}.
\end{align*}
Let us now rescale the equation~\eqref{divform} by defining
\begin{equation}
\hat{u}_\beta(x):= u_\beta( x_0 +sx ),
\end{equation} and similarly $\hat f_k, \hat \ep_k$.
In terms of $\hat{u}_\beta$, the equation becomes
\begin{equation}
- \div \( \frac{\nab \hat{u}_\beta}{1+\hat{u}_\beta}\) +  \cd \hat f_k \hat{u}_\beta = \beta^{-1} \hat \ep_k \quad \mbox{in} \ B_1. 
\end{equation}
Note that the function $\cd \hat f_k$ is bounded. 
Applying the De Giorgi-Nash H\''older estimate (see~\cite[Theorem 8.24]{GT}) for uniformly elliptic equations, we obtain, for some $\sigma>0$ and again for $k\le m-2$,
\begin{align*}
\left\| \hat{u}_\beta \right\|_{L^\infty(B_{1/2})}
+ \left[ \hat{u}_\beta \right]_{C^{0,\sigma}(B_{1/2})} 
&
\leq
C \left(  \int_{B_1} \hat{u}_\beta^2 \right)^{\frac12}  + C\beta^{-1} \left\|\hat  \ep_k \right\|_{L^\infty(B_s)}
\\ & 
\leq
C \exp\( - c(\log^2 \bigl (\beta r^2\big ) \) +  C\beta^{-(k+1)}.
\end{align*}
Repeatedly applying Schauder estimates yields, for every $n \leq 2(m-k-2)$ and $0\le \gamma'\le \gamma$, 
\begin{equation} \left[ \nabla^n \hat{u}_\beta \right]_{C^{0, \gamma'}(B_{1/2})} 
\leq 
C \exp\( - c(\log^2 \bigl (\beta r^2\big ) \) +  C\beta^{-(k+1)},
\end{equation}
with constants $C$ which now depend on $m$. Taking $k=m-2-\frac{n}{2}$, 
after rescaling back, by definition of $u_\beta$  this implies~\eqref{e.higherreg}. \end{proof}
\begin{remark}
The estimates above apply in the same way to all solutions of relations of the form \eqref{divform}. This allows to handle questions of stability of the solutions with respect to $V$: if $V$ is changed into $V + t \xi$ with $\xi  $ supported in $\Sigma$, then letting $\mu_\beta^t$ be the corresponding thermal equilibrium measure, the function 
$u_t= \frac{\mub^t}{\mub}-1$ satisfies 
\be \label{divform2}
\div \( \frac{\nab u_t}{1+u_t}\) = \beta \(  \cd \mub^0 u_t-t\Delta \xi\) .\ee
which 
is of the same form as \eqref{divform}. The same method then allows to estimate $u_t$ hence $\mub^t/\mub$.
\end{remark}


\end{document}